\documentclass[a4paper]{article}

\usepackage{amsmath}

\usepackage{amsthm}
\usepackage[dvips]{graphicx}

\usepackage{amscd}
\usepackage{epsfig}
 
\usepackage{color}

\usepackage{psfrag}
\usepackage{srcltx}

\usepackage{amssymb}
\usepackage{latexsym}
\newtheorem{Theorem}{Theorem}

\newtheorem{Definition}[Theorem]{Definition}
\newtheorem{Proposition}[Theorem]{Proposition}
\newtheorem{Lemma}[Theorem]{Lemma}
\newtheorem{Corollary}[Theorem]{Corollary}
\newtheorem{Remark}[Theorem]{Remark}

\newtheorem*{Assumption}{Assumption}

\begin{document}

\title{Conway spheres as ideal points of the character variety}

\author{Luisa Paoluzzi and Joan Porti\footnote{Partially supported by the 
Spanish Micinn/Feder through grant MTM2009-07594 and prize ICREA ACADEMIA 
2008}} 
\date{\today}

\maketitle

\begin{abstract}
\vskip 2mm

We prove that any Bonahon-Siebenmann family of Conway spheres for a hyperbolic
link is associated to an ideal point of the character variety of the link.

\noindent\emph{AMS classification: }  Primary 57M25; Secondary 20C99;  57M50.

\vskip 2mm

\noindent\emph{Keywords:} 
Conway spheres, hyperbolic links, Culler-Shalen theory, character varieties.

\end{abstract}

\section{Introduction}
\label{section:introduction}

Essential surfaces play a central role in the understanding of $3$-dimensional 
manifolds. In their seminal work on character varieties \cite{CullerShalen}, 
Culler and Shalen provided a powerful tool to find essential surfaces in 
$3$-manifolds. These are associated to ideal points of the character variety. 
However, not all essential surfaces can be detected this way: examples can 
be found in \cite{SchanuelZhang}.

In this paper, we consider a very special family of essential surfaces, that is
\emph{Conway spheres}. These are spheres embedded in ${\mathbf S}^3$ which meet 
a link transversally in four points and whose intersections with the link 
exterior are essential. Here by \emph{essential} we mean incompressible, 
$\partial$-incompressible and non boundary parallel.

In the following, given a Conway sphere $C$, we shall denote by $C'$ the
planar surface which is the intersection of $C$ with the exterior of the link.

\begin{Theorem}
\label{theorem:onesphere}
Assume $L$ is a hyperbolic link admitting a unique Conway sphere $C$ up to 
isotopy. Then $C'$ corresponds to an ideal point of the character variety of 
the exterior of $L$.
\end{Theorem}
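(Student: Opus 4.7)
The plan is to combine Culler--Shalen theory with the amalgamated product decomposition of $\pi_1(E_L)$ that $C'$ provides. The link exterior splits as $E_L = E_1 \cup_{C'} E_2$, where $E_1, E_2$ are the two tangle exteriors; this gives
\[
\pi_1(E_L) \;=\; \pi_1(E_1) *_H \pi_1(E_2), \qquad H := \pi_1(C') \cong F_3.
\]
Let $\rho_0$ be the holonomy of the complete hyperbolic structure on $E_L$, let $\chi_0 \in X(E_L)$ be its character, and let $X_0$ be the component of $X(E_L)$ containing $\chi_0$. By Thurston's deformation theorem, $\dim_{\mathbb C} X_0 = n$, where $n$ is the number of components of $L$.

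The crux of the argument is to produce an irreducible curve $Y \subset X_0$ through $\chi_0$ on which the restriction map $r \colon X(E_L) \to X(H)$ is constant. Since $X(E_L)$ is (up to finitely many sheets) the fibered product $X(E_1) \times_{X(H)} X(E_2)$, the tangent space to the fiber $r^{-1}(\chi_0|_H)$ at $\chi_0$ is identified via Mayer--Vietoris with
\[
\ker\!\Bigl( H^1(\pi_1(E_L);\mathrm{Ad}\,\rho_0) \longrightarrow H^1(H;\mathrm{Ad}\,\rho_0) \Bigr),
\]
which measures the failure of the restrictions $r_i \colon X(E_i) \to X(H)$ to be locally injective at the characters of $\rho_0|_{\pi_1(E_i)}$. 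A dimension count using $\dim X(E_L) = \dim X(E_1) + \dim X(E_2) - \dim X(H)$, together with the cohomological lower bounds supplied by the hyperbolicity of $L$ (half-lives-half-dies for the $E_i$) and the uniqueness of $C$ (which rigidifies the internal structure of the tangle exteriors), is expected to force this kernel to be non-zero. Choosing an irreducible curve inside a positive-dimensional component of the fiber yields the desired $Y$.

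Once $Y$ is found, the remaining steps follow the Culler--Shalen paradigm. The smooth projective completion of $Y$ carries an ideal point $\xi$; associated to $\xi$ is a non-trivial action of $\pi_1(E_L)$ on a Bass--Serre tree $T$, together with an essential surface $\Sigma \subset E_L$ realized as the preimage of the midpoints of the edges of $T$ under a $\pi_1$-equivariant map $\widetilde{E_L} \to T$. Since $r$ is constant on $Y$, the trace of every $h \in H$ stays bounded at $\xi$; hence $H$ fixes a vertex of $T$, so $\pi_1(C')$ is contained in a vertex stabilizer, and $C'$ appears (up to isotopy) among the components of $\Sigma$. To conclude $\Sigma = C'$, one observes that any additional component of $\Sigma$ is an essential surface in $E_L$ disjoint from $C'$; hyperbolicity of $L$ rules out spheres, discs, annuli, and tori, and the uniqueness of $C$ rules out other Conway spheres, while a standard innermost/outermost analysis combined with the structure of the tangle exteriors $E_1, E_2$ eliminates the remaining possibilities.

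The principal obstacle lies in the first step: establishing the non-triviality of the restriction kernel, i.e.\ translating the topological uniqueness of $C$ into the algebraic statement that the restriction maps $r_i$ fail to be generically injective at $[\rho_0|_{\pi_1(E_i)}]$. This is the technical heart of the argument and requires a careful analysis of the character varieties of the tangle exteriors relative to $C'$; the remaining steps, while requiring care in identifying $\Sigma$ exactly with $C'$, are of a more standard flavor.
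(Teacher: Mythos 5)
Your strategy diverges fundamentally from the paper's, and both of its key steps have genuine gaps. First, the curve $Y$ through the discrete faithful character $\chi_0$ on which the restriction $r\colon X(E_L)\to X(H)$ is constant does not exist in general. When $L$ is a knot, the component $X_0$ containing $\chi_0$ is itself a curve ($n=1$), so $Y$ would have to be $X_0$; but the trace of a meridian $\mu$ is a local coordinate on $X_0$ near $\chi_0$ (Thurston's Dehn filling argument), and $\mu\in H=\pi_1(C')$, so $r$ is non-constant on $X_0$. Your dimension count is only asserted to be ``expected'' to work, and here it cannot. The paper avoids this entirely: its curve $\mathcal D\subset X(E_L)$ is not constructed near the holonomy of $E_L$ at all. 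Instead one puts geometric orbifold structures on the two tangle complements (branching index $2$ on the strands meeting $C$, index $3$ on closed components), takes the holonomy characters $\chi_1,\chi_2$ of these orbifolds, and shows that the images of neighbourhoods of $\chi_1,\chi_2$ in $X(C')$ meet along a curve on which the meridian trace $t$ is \emph{non-constant} --- so the restriction to $\pi_1(C')$ is emphatically not constant along the relevant curve. Irreducible characters on that intersection curve glue uniquely to characters of $E_L$; as they degenerate to the common reducible character $\chi_0=r_1(\chi_1)=r_2(\chi_2)$, the restrictions to the two sides converge to $\chi_1,\chi_2$, but the limiting representations on $C'$ are non-conjugate reducible representations with equal character, so the sequence has no limit in $X(E_L)$ and must converge to an ideal point.

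Second, even granting $Y$, your identification of the dual surface is backwards. If every trace of $H$ is bounded at $\xi$, then $H$ fixes a vertex of $T$, which means $C'$ can be homotoped into a complementary region of the dual surface $\Sigma$; it does not put $C'$ among the components of $\Sigma$ --- if anything it suggests $C'$ can be omitted from $\Sigma$. To force $\Sigma=C'$ one needs $\pi_1(E_1)$ and $\pi_1(E_2)$ each to fix a vertex, with the two vertices distinct, which is exactly what the paper's Lemma~\ref{lemma:suffideal} arranges by making the restricted character sequences converge on each side while the limits fail to glue. Your cleanup step is also unsound: hyperbolicity only excludes essential spheres, discs, annuli and tori, and uniqueness of the Conway sphere says nothing about closed essential surfaces of higher genus or other essential bounded surfaces in the tangle exteriors, so the proposed innermost/outermost analysis will not eliminate the remaining possibilities.
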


For hyperbolic links admitting more than one Conway sphere, the above result
generalises to the following:

\begin{Theorem}
\label{theorem:morespheres}
Let $L$ be a hyperbolic link, and let $C_1,\dots,C_k$ be a Bonahon-Siebenmann
family of Conway spheres. Then the surface $C'_1\cup\cdots\cup C'_k$ 
corresponds to an ideal point of the character variety of the exterior of $L$.
\end{Theorem}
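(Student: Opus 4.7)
The plan is to produce a curve $X_0$ in the character variety $X(E(L))$ through the discrete faithful character $\chi_{\rho_0}$, together with an ideal point of $X_0$ whose associated action on a Bass--Serre tree is dual to $C'_1 \cup \dots \cup C'_k$. The first step is to encode the Bonahon--Siebenmann decomposition as a graph of groups: writing $M = E(L)$ and letting $M_1, \dots, M_N$ denote the closures of the components of $M \setminus (C_1 \cup \dots \cup C_k)$, one obtains a reduced graph of groups $\mathcal{G}$ whose vertex groups are the $\pi_1(M_i)$ and whose edge groups are the $\pi_1(C'_j) \cong F_3$. Consequently $X(M)$ sits naturally inside a fiber product of the $X(M_i)$ over the $X(C'_j)$; this structure will be exploited throughout.

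Since each edge group $\pi_1(C'_j)$ is non-abelian, its centraliser in $SL_2(\mathbb{C})$ is trivial and naive bending along Conway spheres is unavailable. I would instead use the fact that the relative character variety of a four-holed sphere is positive-dimensional, combined with Thurston's deformation theorem, which grants each hyperbolic piece $M_i$ local deformations of $\rho_0|_{M_i}$ parametrised by its boundary components (both cusps and Conway sphere boundaries). Deformations of $\rho_0|_{M_{i_1}}$ that alter the image of $\pi_1(C'_j)$ must be matched by deformations of $\rho_0|_{M_{i_2}}$ on the other side of $C_j$; requiring agreement in $X(C'_j)$ yields a system of compatibility equations that, thanks to the positive-dimensionality of each $X(C'_j)$ and the reduced nature of $\mathcal{G}$, admits a non-constant solution. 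A one-parameter subfamily of such solutions produces the curve $X_0 \subset X(M)$. To reach an ideal point of $X_0$, I would let the parameter diverge and verify that some trace $\chi(\gamma)$ becomes unbounded, for a $\gamma \in \pi_1(M)$ whose $\rho_0$-axis crosses every $C_j$ transversely.

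The main obstacle, as I see it, is identifying the essential surface associated to this ideal point by Culler--Shalen theory with $C'_1 \cup \dots \cup C'_k$, and not with a proper sub-family or with an accidentally detected different essential surface. The plan is to combine the uniqueness up to isotopy of a Bonahon--Siebenmann family in a hyperbolic link exterior with a careful analysis of the valuation at the ideal point, showing that every $C_j$ contributes a non-trivial edge stabiliser in the Bass--Serre tree at infinity. This amounts to arguing that the deformation is genuinely non-trivial on each edge group of $\mathcal{G}$, which should follow from the independence of the deformations along different Conway spheres together with the uniqueness of the Bonahon--Siebenmann decomposition.
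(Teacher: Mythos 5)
Your overall scheme (deform representations of the pieces, match them along the Conway spheres, pass to an ideal point, identify the dual surface) is the right outline, but the proposal is anchored at the wrong point of the character variety, and as a consequence the two genuinely hard steps are left unresolved. The paper does not deform the discrete faithful character $\chi_{\rho_0}$ of the link exterior at all. Instead it cones each arc of $L$ meeting a Conway sphere to an order-$2$ singular arc, so that each piece becomes a geometric orbifold $\mathcal O_i$ (hyperbolic, or Seifert fibred with hyperbolic base; Euclidean bases must first be killed by raising the ramification of closed components to $3$ --- your proposal ignores the Seifert pieces altogether). The distinguished point is the tuple of orbifold holonomy characters $\chi_i$, whose restrictions to each $C'_j$ give a common \emph{reducible} character $\chi_0$ with coordinates among $(\pm2,\pm2,\pm2,0)$, realised by \emph{non-conjugate} representations on the two sides of $C_j$ (the cusp shapes, resp.\ the Seifert fibrations, do not match). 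This is exactly the mechanism that produces the ideal point: nearby characters on the constructed curve restrict irreducibly to the $C'_j$, hence glue to \emph{unique} characters of $E(L)$, but their limit does not glue, so the lifted sequence escapes to infinity in $X(E(L))$; Lemma~\ref{lemma:suffideal} then yields the ideal point and simultaneously identifies the dual surface with $C'_1\cup\cdots\cup C'_k$ via an explicit equivariant map to the Bass--Serre tree modelled on the dual tree of the decomposition. By contrast, your plan of letting a parameter diverge on a curve through $\chi_{\rho_0}$ until ``some trace becomes unbounded'' gives no control over which essential surface is detected, and your closing appeal to uniqueness of the Bonahon--Siebenmann family plus a ``careful analysis of the valuation'' is precisely the step that cannot be waved through: an ideal point of an arbitrary curve through the discrete faithful character detects \emph{some} essential surface, with no reason for it to be the union of the Conway spheres rather than a boundary-slope surface or a proper subfamily.

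There is also a quantitative gap in your matching step. The compatibility system you describe is solved in the paper only after (i) proving that the germ of the slice $\mathcal Y_l\subset X(C'_l)$ at $\chi_0$ is irreducible (Lemma~\ref{Lemma:locallyirreducible}), (ii) two-colouring the pieces into $M_+$ and $M_-$ so that the images $r_\pm(U_\pm)$ have complementary dimensions $b_++b_-=\dim\mathcal Y+1$, and (iii) checking that these images meet $\{t=0\}$ in $k$-dimensional sets intersecting only at $\chi_0$ (Lemma~\ref{Lemma:transversevarieties}, which rests on the cusp-shape computation and the non-matching of fibrations). Without (i)--(iii) the existence of a non-constant solution of your compatibility equations is not established, and without the transversality in (iii) one cannot conclude that $t$ is non-constant on the resulting curve --- which is what guarantees irreducible characters of $C'$ nearby (Lemma~\ref{lemma:irrdeformations}) and hence the lift to a curve in $X(E(L))$ in the first place.
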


Recall that the \emph{Bonahon-Siebenmann family of toric sub-orbifolds} 
\cite{FrancisLarry} is the equivalent for orbifolds of the 
Jaco-Shalen-Johannson family for manifolds. Remark that, once the orders of 
ramification of the components of $L$ are assigned, the Bonahon-Siebenmann 
family is unique up to orbifold isotopy, as is the Jaco-Shalen-Johannson 
family, but the family can change for different orders of ramification. In our 
situation, since the link is hyperbolic, all elements of a Bonahon-Siebenmann 
family are Conway spheres which meet only components with order of ramification 
equal to $2$. 

Assume the hypotheses of Theorem \ref{theorem:onesphere} are fulfilled. Let
$M_1$ and $M_2$ be the components of ${\mathbf S}^3\setminus C$, and $M'_1$
respectively $M'_2$ their intersections with the exterior of the link. The key
idea in the proof of Theorem \ref{theorem:onesphere} is to consider the
projections of the character varieties for $M'_1$ and $M'_2$ into the character
variety for $C'$. Their intersection is a curve containing a distinguished
point. Such point satisfies the following properties: 

\begin{itemize}

\item It is a reducible character;

\item It is a limit point of irreducible characters in the intersection of the
two projections.

\end{itemize}

These conditions imply that the irreducible characters in the intersection
correspond to unique characters for the link exterior. However, these
characters do not have a limit in the character variety for the link exterior.
This means that our distinguished point is an ideal point for the link
exterior, even if it corresponds to a character of both $M_1'$ and $M_2'$.

Remark that there may exist representations of the link exterior which 
restricted to $M_i$ induce the same distinguished characters. For instance, 
this is the case if the link exterior is obtained as the double of $M_1'$, 
for in this case each representation for $M_1'$ can be extended to a 
representation of the link exterior, simply by doubling.

The proof of Theorem~\ref{theorem:morespheres} relies basically on the same 
idea seen for Theorem~\ref{theorem:onesphere}: some extra care is however 
needed to deal with several components at the same time.

The paper is organised as follows: the next two sections provide background
material on representation and character varieties 
(Section~\ref{section:background}) and on Culler-Shalen theory
(Section~\ref{section:cullershalen}). Section~\ref{section:C'} is devoted to 
the study of the character variety of the surface $C'$ and, more precisely, of 
a specific subvariety we shall be working with. In
Section~\ref{section:cutting} we shall discuss properties of the pieces 
obtained by cutting the link exterior along Conway spheres and their character 
varieties. In the last two sections the proofs of
Theorem~\ref{theorem:onesphere} (Section~\ref{section:thm1}) and 
Theorem~\ref{theorem:morespheres} (Section~\ref{section:bs families}) will be 
given.

\section{Background on varieties of representations}
\label{section:background}

The variety of representations of a connected compact manifold $N^n$ of 
dimension $n$ is the set of representations of its fundamental group in 
$SL(2,\mathbb{C})$:
$$
R(N^n)=\hom(\pi_1(N^n),SL(2,\mathbb{C})).
$$
Since we assume that $N^n$ is compact, $\pi_1(N^n)$ is finitely generated and 
thus $R(N^n)$ can be embedded in a product $SL(2,\mathbb{C})\times\cdots\times 
SL(2,\mathbb{C})$ by mapping each representation to the image of a generating 
set. In this way $R(N^n)$ is an affine algebraic set, whose defining 
polynomials are induced by the relations of a presentation of $\pi_1(N^n)$.
This structure is independent of the choice of presentation of $\pi_1(N^n)$, 
cf.~\cite{LubotzkyMagid}.

Given a representation $\rho\in R(N^n)$, its character is the map
$\chi_{\rho}:\pi_1(N^n)\to \mathbb C$ defined by 
$\chi_{\rho}(\gamma)=\operatorname{trace}(\rho(\gamma))$, 
$\forall\gamma\in\pi_1(N^n)$. The set of all characters is again an affine 
algebraic set, denoted by $X(N^n)$ and called the \emph{variety of characters 
of $N^n$}, even if it is often not irreducible \cite{AcunaMontesinos}. The 
projection
$$
\pi:R(N^n)\to X(N^n)
$$
is the quotient in the sense of invariant theory of the action of 
$PSL(2,\mathbb{C})=SL(2,\mathbb{C})/\mathcal{Z}(SL(2,\mathbb{C}))$ by 
conjugation: namely, if $ \mathbb{C}[R(N^n)]^{PSL(2,\mathbb{C})}$ denotes the 
$\mathbb{C}$-subalgebra of the polynomial functions on $R(N^n)$ which are 
$PSL(2,\mathbb{C})$-invariant, then
$$
\mathbb{C}[R(N^n)]^{PSL(2,\mathbb{C})}=\pi^*(\mathbb{C}[X(N^n)]),
$$
cf.~\cite{CullerShalen}. 

When $N^n$ is not connected, if $N^n=N^n_1\cup\cdots\cup N^n_k$ is the 
decomposition into connected components, one defines
$$
R(N^n)=R(N^n_1)\times \cdots \times R(N^n_k),
$$
and
$$
X(N^n)=X(N^n_1)\times \cdots \times X(N^n_k).
$$

\medskip

\begin{Definition}
 A representation $\rho\in R(N^n)$ is called \emph{reducible} if it has an 
invariant line in $\mathbb C^2$, namely if it is conjugate to a representation 
whose image consists of upper triangular matrices. Otherwise a representation 
is called \emph{irreducible}. 
\end{Definition}

\begin{Lemma}\cite{CullerShalen}
\label{Lemma:irreduciblerestriction}
Given $\rho,\rho'\in R(N^n)$, if $\chi_{\rho}= \chi_{\rho'}$ and $\rho$ is 
irreducible, then $\rho$ and $\rho'$ are conjugate; in particular $\rho'$ is
irreducible.

In addition, the projection $\pi:R(N^n)\to X(N^n)$ induces a holomorphic fibre 
bundle of the set of irreducible representations over the set or irreducible 
characters.
\end{Lemma}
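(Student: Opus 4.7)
The plan is to prove the lemma in two steps: (i) that an irreducible $SL(2,\mathbb{C})$-representation is determined up to conjugation by its character, and (ii) that the resulting free action produces a principal bundle on the irreducible locus. Both are classical, but I would carry out (i) by constructing a canonical normal form depending only on the character.

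For (i), since $\rho$ is irreducible, its image has no common invariant line in $\mathbb{C}^2$. First I would exhibit $\gamma_1\in\pi_1(N^n)$ with $\rho(\gamma_1)$ having two distinct eigenvalues: otherwise every $\rho(\gamma)$ would be $\pm I$ or a non-trivial unipotent, and a fixed-line argument would produce a common invariant line, a contradiction. Conjugating, write $\rho(\gamma_1)=\operatorname{diag}(\lambda,\lambda^{-1})$ with $\lambda^2\neq 1$. Irreducibility then provides $\gamma_2$ with $\rho(\gamma_2)=\begin{pmatrix}a&b\\ c&d\end{pmatrix}$ having $bc\neq 0$, and a further conjugation by a diagonal matrix imposes $b=1$. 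Applying the same normalization to $\rho'$, the equalities $\operatorname{tr}\rho'(\gamma_1)=\lambda+\lambda^{-1}$, $\operatorname{tr}\rho'(\gamma_2)=a+d$, $\operatorname{tr}\rho'(\gamma_1\gamma_2)=\lambda a+\lambda^{-1}d$, together with $\det=1$, force $\rho'(\gamma_i)=\rho(\gamma_i)$ for $i=1,2$. For an arbitrary $\gamma$, the three traces $\operatorname{tr}\rho(\gamma)$, $\operatorname{tr}\rho(\gamma_1\gamma)$, $\operatorname{tr}\rho(\gamma_2\gamma)$ together with $\det=1$ then recover all four entries of $\rho(\gamma)$ (the diagonal part from the first two using $\lambda\neq\lambda^{-1}$, and the off-diagonal part from the third combined with the determinant condition), so $\rho=\rho'$ after the common conjugation. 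Irreducibility of $\rho'$ then follows from its being conjugate to $\rho$.

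For (ii), Schur's lemma combined with (i) shows that the $SL(2,\mathbb{C})$-stabilizer of an irreducible $\rho$ under conjugation is the centre $\{\pm I\}$, so $PSL(2,\mathbb{C})$ acts freely on the Zariski-open locus $R^{\mathrm{irr}}(N^n)\subset R(N^n)$. The normal form of (i) provides local holomorphic sections of $\pi$ on open sets where a fixed pair $(\gamma_1,\gamma_2)$ realizes the non-degeneracy conditions, and since the action is proper on $R^{\mathrm{irr}}(N^n)$ (the orbits being closed of maximal dimension), $\pi$ restricts to a holomorphic principal $PSL(2,\mathbb{C})$-bundle on this locus. The main obstacle is really the algebraic bookkeeping in (i): one needs the Fricke identity $\operatorname{tr}(AB)+\operatorname{tr}(AB^{-1})=\operatorname{tr}(A)\operatorname{tr}(B)$ to manage the off-diagonal entries cleanly, and one must check that the non-degeneracy conditions ($\lambda^2\neq 1$ and $bc\neq 0$) are Zariski-open so that the local sections are genuinely defined on open subsets of characters.
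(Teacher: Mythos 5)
The paper offers no proof of this lemma --- it is quoted from Culler--Shalen --- and your argument is essentially a reconstruction of their classical trace-coordinate proof, which is the right approach. Most of it is sound: the existence of $\gamma_1$ with $\operatorname{tr}\rho(\gamma_1)\neq\pm2$, the normal form $\rho(\gamma_1)=\operatorname{diag}(\lambda,\lambda^{-1})$ and $\rho(\gamma_2)=\begin{pmatrix}a&1\\ c&d\end{pmatrix}$ with $c\neq0$ (for $\gamma_2$ you may need to multiply an element not preserving $\mathbb{C}e_1$ by one not preserving $\mathbb{C}e_2$, but that is routine), and the recovery of $\rho'(\gamma_1)$ and $\rho'(\gamma_2)$ from the traces of $\gamma_1$, $\gamma_2$ and $\gamma_1\gamma_2$ all work.

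The gap is in the final step. Write $\rho(\gamma)=\begin{pmatrix}p&q\\ r&s\end{pmatrix}$. The traces of $\gamma$ and $\gamma_1\gamma$ do determine $p$ and $s$, but $\operatorname{tr}\rho(\gamma_2\gamma)=ap+ds+(r+cq)$ only yields the sum $r+cq$, while the determinant yields the product $qr$, hence $(cq)\cdot r$. So these data determine only the \emph{unordered} pair $\{cq,\,r\}$, namely the two roots of $X^2-(r+cq)X+c\,qr$: the matrix $\begin{pmatrix}p& r/c\\ cq& s\end{pmatrix}$ has the same three traces and the same determinant as $\rho(\gamma)$. Three traces plus the determinant do \emph{not} pin down a matrix in $SL(2,\mathbb{C})$, so the claim that ``the off-diagonal part'' follows ``from the third combined with the determinant condition'' fails as stated. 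The standard repair is to use one more trace, $\operatorname{tr}\rho(\gamma_1\gamma_2\gamma)=\lambda(ap+r)+\lambda^{-1}(cq+ds)$, which together with $\operatorname{tr}\rho(\gamma_2\gamma)=(ap+r)+(cq+ds)$ and $\lambda\neq\lambda^{-1}$ separates $ap+r$ from $cq+ds$ and hence determines $r$ and $q$ individually (using $c\neq 0$); since $\chi_\rho=\chi_{\rho'}$ gives the trace of \emph{every} word, this costs nothing. With that correction part (i) is complete, and part (ii) goes through: note, though, that properness of the action is neither needed nor established by ``orbits being closed'' --- the free $PSL(2,\mathbb{C})$-action together with the holomorphic local sections provided by the normal form already gives the local trivializations $U\times PSL(2,\mathbb{C})\cong\pi^{-1}(U)$.
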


According to Lemma~\ref{Lemma:irreduciblerestriction}, it makes sense to call a
character \emph{reducible} (respectively \emph{irreducible}) if it is
associated to a reducible (respectively irreducible) representation.
In addition, it is proved in \cite{CullerShalen} that the set of irreducible 
characters is Zariski open.

\medskip

Now assume that $N^3$ is a $3$-manifold and $S\subset N^3$ is an essential 
connected surface, that separates $N^3$ into two components $N^3_1$ and 
$N^3_2$. Then $\pi_1(N^3)$ is an amalgamated product
$$
\pi_1(N^3)=\pi_1(N^3_1)*_{\pi_1(S)}\pi_1(N^3_2).
$$
Let $r_i$ and $r_i'$ denote the restriction maps, induced by the natural
inclusions, in the following commutative diagram:
$$
\begin{CD}
X(N^3)   @>{r_1}>> X(N^3_1) \\
@V{r_2}VV  @VV{r_1'}V\\
X(N^3_2) @>{r_2'}>> X(S) 
\end{CD}
$$

\begin{Lemma}
\label{lemma:glueirreducible}
Let $\chi_1\in X(N^3_1)$ and $\chi_2\in X(N^3_2)$ be characters such that  
$r_1'(\chi_1)= r_2'(\chi_2)$ is irreducible. Then there exists a unique 
$\chi\in X(N^3)$ satisfying $r_1(\chi)=\chi_1$ and $r_2(\chi)=\chi_2$.
\end{Lemma}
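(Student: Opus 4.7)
The plan is to reduce the question to a statement about representations using Lemma~\ref{Lemma:irreduciblerestriction} and the universal property of the amalgamated free product. For existence, I would pick arbitrary lifts $\rho_1\in R(N^3_1)$ and $\rho_2\in R(N^3_2)$ with $\chi_{\rho_i}=\chi_i$. The restrictions $\rho_1|_{\pi_1(S)}$ and $\rho_2|_{\pi_1(S)}$ share the common character $r_1'(\chi_1)=r_2'(\chi_2)$, which by hypothesis is irreducible. By Lemma~\ref{Lemma:irreduciblerestriction} there exists $A\in SL(2,\mathbb{C})$ such that $\rho_1|_{\pi_1(S)}=(A\rho_2 A^{-1})|_{\pi_1(S)}$. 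Replacing $\rho_2$ by $A\rho_2 A^{-1}$, the two representations now agree on $\pi_1(S)$, so the universal property of $\pi_1(N^3)=\pi_1(N^3_1)*_{\pi_1(S)}\pi_1(N^3_2)$ produces a representation $\rho\in R(N^3)$ whose restrictions are $\rho_1$ and $\rho_2$. The character $\chi=\chi_\rho$ then satisfies $r_1(\chi)=\chi_1$ and $r_2(\chi)=\chi_2$.

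For uniqueness, suppose $\chi,\chi'\in X(N^3)$ both satisfy the two restriction conditions, and lift them to $\rho,\rho'\in R(N^3)$. A key preliminary observation is that, because the restriction $\rho|_{\pi_1(S)}$ has irreducible character, the image of $\pi_1(S)$ is already irreducible, and a fortiori $\rho|_{\pi_1(N^3_i)}$ is irreducible for $i=1,2$; the same applies to $\rho'$. Since $\rho|_{\pi_1(N^3_1)}$ and $\rho'|_{\pi_1(N^3_1)}$ have the same character $\chi_1$ and the first is irreducible, Lemma~\ref{Lemma:irreduciblerestriction} gives an element of $SL(2,\mathbb{C})$ conjugating one to the other; after applying it to $\rho'$ globally, we may assume $\rho|_{\pi_1(N^3_1)}=\rho'|_{\pi_1(N^3_1)}$, and in particular the two representations already agree on $\pi_1(S)$.

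Now the restrictions $\rho|_{\pi_1(N^3_2)}$ and $\rho'|_{\pi_1(N^3_2)}$ again share the character $\chi_2$ and are irreducible, hence conjugate by some $B\in SL(2,\mathbb{C})$. This $B$ must conjugate $\rho'|_{\pi_1(S)}$ to $\rho|_{\pi_1(S)}$, but these already coincide, so $B$ centralises the irreducible image of $\pi_1(S)$. Since the centraliser in $SL(2,\mathbb{C})$ of an irreducible subgroup is reduced to $\{\pm I\}$, we deduce $B=\pm I$ and therefore $\rho|_{\pi_1(N^3_2)}=\rho'|_{\pi_1(N^3_2)}$. Combined with the equality on $\pi_1(N^3_1)$, this gives $\rho=\rho'$ and hence $\chi=\chi'$.

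I expect the main subtlety to be the centraliser step in uniqueness: one must really use that irreducibility of the character on $\pi_1(S)$ forces the centraliser of $\rho|_{\pi_1(S)}$ to be central in $SL(2,\mathbb{C})$, which is precisely what kills the ambiguity left by the two independent conjugations on the two pieces. The existence direction, by contrast, is essentially formal once Lemma~\ref{Lemma:irreduciblerestriction} is in hand.
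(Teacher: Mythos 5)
Your proof is correct and follows essentially the same route as the paper: existence by conjugating the two lifts so that they agree on $\pi_1(S)$ and invoking the amalgamated product, and uniqueness via the triviality (up to $\pm I$) of the centraliser of the irreducible restriction to $\pi_1(S)$. The paper states the uniqueness step in one line; your version merely spells out the same centraliser argument in detail.
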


\begin{proof}
Take representations $\rho_i\in R(N^3_i)$ such that $\chi_{\rho_i}=\chi_i$.  
Since $r_1'(\chi_1)= r_2'(\chi_2)$ is irreducible, after conjugation (see
Lemma~\ref{Lemma:irreduciblerestriction}), we may assume that the restrictions 
of $\rho_1$ and $\rho_2$ to $\pi_1(S)$ coincide. Thus they define a 
representation on the amalgamated product 
$\pi_1(N^3)=\pi_1(N^3_1)*_{\pi_1(S)}\pi_1(N^3_2)$. In addition, this 
representation is unique up to conjugacy, because the restriction of $\rho_i$ 
to $\pi_1(S)$ is irreducible and has no centraliser.
\end{proof}

Next assume that the essential surface $S\subset N^3$ is not connected. Let 
$S_1\ldots,S_k$ be its components, which are non-parallel, and assume that they 
split $N^3$ into $k+1$ components $N^3_1,\ldots N^3_{k+1}$, i.e. each $S_l$
is separating. The proof of Lemma~\ref{lemma:glueirreducible} can be extended 
by induction to obtain the following:

\begin{Lemma}
\label{lemma:glueseveralirreducible}
Let $\chi_1\in X(N^3_1), \ldots , \chi_{k+1}\in X(N^3_{k+1})$ be characters 
whose restrictions to 
$N^3_i\cap N^3_j$ coincide and are 
irreducible, whenever this intersection is nonempty. Then there exists a unique 
$\chi\in X(N^3)$ whose restriction to $N_i^3$ is $\chi_i$, for 
$i=1,\ldots,k+1$.
\end{Lemma}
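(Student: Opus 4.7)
The plan is to argue by induction on the number $k$ of surface components, with the case $k=1$ being precisely Lemma~\ref{lemma:glueirreducible}.

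The key structural observation for the inductive step is that, since each $S_l$ is separating in $N^3$, the dual graph of the decomposition---whose vertices are the pieces $N^3_1,\ldots,N^3_{k+1}$ and whose edges are the surface components $S_1,\ldots,S_k$---is a tree with $k+1$ vertices and $k$ edges. I would then pick a leaf; after reindexing, I may assume $N^3_{k+1}$ meets exactly one other piece, namely $N^3_k$, and does so across a single component $S_k$.

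Setting $N'=N^3_1\cup\cdots\cup N^3_k$, the remaining surfaces $S_1,\ldots,S_{k-1}$ are still essential and separating in $N'$: this follows because the relevant dual subgraph is obtained from the original tree by deleting the leaf vertex $N^3_{k+1}$ and its incident edge, so it is itself a tree with $k$ vertices and $k-1$ edges. The characters $\chi_1,\ldots,\chi_k$ inherit from the hypothesis all required compatibility and irreducibility conditions on the surfaces $S_1,\ldots,S_{k-1}$, so by the inductive hypothesis there is a unique $\chi'\in X(N')$ whose restriction to each $N^3_i$ equals $\chi_i$ for $i=1,\ldots,k$.

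Finally, I would apply Lemma~\ref{lemma:glueirreducible} to the two-piece decomposition $N^3=N'\cup_{S_k}N^3_{k+1}$. Both $\chi'$ and $\chi_{k+1}$ restrict on $S_k$ to the restriction of $\chi_k$, which is irreducible by hypothesis, so the lemma produces a unique $\chi\in X(N^3)$ extending both, and this $\chi$ has the required restriction $\chi_i$ on every $N^3_i$. Uniqueness is automatic: any candidate $\chi$ restricts to $N'$ to a character fitting the inductive uniqueness statement, hence equals $\chi'$ there, and then $\chi$ itself is pinned down by the single-surface lemma. The only---rather mild---obstacle is the preliminary combinatorial setup: justifying the tree structure of the dual graph, selecting a leaf, and confirming that deletion of the leaf's piece preserves the essential-separating structure in $N'$. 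Once this is in place, both existence and uniqueness drop out of the base case together with the inductive hypothesis.
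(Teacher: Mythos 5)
Your proposal is correct and matches the paper's intent exactly: the paper only remarks that ``the proof of Lemma~\ref{lemma:glueirreducible} can be extended by induction,'' and your argument---peeling off a leaf of the dual tree of the decomposition and applying the two-piece lemma at the last step---is the standard way to carry out that induction. No gaps.
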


\section{Culler-Shalen theory}
\label{section:cullershalen}

Culler-Shalen theory associates essential surfaces to ideal points in
the projective completion of 
a curve of characters. Recall that a surface is called essential if it is 
incompressible, $\partial$-incompressible and not boundary parallel. The 
surface does not need to be connected, and from now on we assume that its 
components are pairwise nonparallel.

Given an algebraic curve $\mathcal C$ in $X(N^3)$, let $\mathcal C_0$ be a 
smooth model of the projective completion of $\mathcal C$. An ideal point $x$ 
is a point in $\mathcal C_0\setminus\mathcal C$, namely a point at infinity. 
The essential surface associated to $x$ is constructed in essentially two main 
steps:
\begin{itemize}
\item 
\emph{From ideal points to representations over a field with a discrete 
valuation}. 
The ideal point $x\in \mathcal C_0\setminus\mathcal C$ defines a 
discrete valuation $v_x$ on the function field $\mathbb{C}(\mathcal C)$ as 
follows. For any rational function $f\in \mathbb{C}(\mathcal C)$,
$v_x(f)=n\geq 0$ if $x$ is a zero of $f$ of order $n$, and $v_x(f)=-n < 0$ if 
$x$ is a pole of order $n$. Given a curve $\tilde {\mathcal C}\subset R(N^3)$ 
that projects to $ \mathcal C$, the field $F=\mathbb{C}(\tilde {\mathcal C})$ 
is a finite extension of $\mathbb{C}(\mathcal C)$, and (a multiple of) the 
valuation $v_x$ extends to a discrete valuation $\tilde v\!:\! F\to \mathbb Z$. 
Construct the tautological representation $P:\pi_1(N^3)\to SL(2, F)$, by 
viewing the entries of a matrix in $SL(2,\mathbb C)$ as polynomial 
functions on $\tilde{\mathcal C}$. Notice that there are elements 
$\gamma\in\pi_1(N^3)$ that satisfy $ \tilde v( P(\gamma))<0$.

\item \emph{The action on the Bass-Serre tree}.  
In \cite{Serre}, J.-P. Serre constructed the Bass-Serre tree $T$ associated to 
$SL(2,F)$, where $F$ is a field with a discrete valuation 
$\tilde v: F\to \mathbb Z$. The group $SL(2,F)$ acts on this tree without 
reversing the orientation of its edges. This action has the property 
that the stabilisers of vertices are precisely the subgroups that can be 
conjugated into $SL(2,R_{\tilde v})$, where 
$R_{\tilde v}=\{f\in F\mid \tilde v(f)\geq 0\}$ is the ring of the valuation. 
Since there are elements $\gamma\in\pi_1(N^3)$ satisfying 
$\tilde v (P(\gamma))<0$, the induced action of $\pi_1(N^3)$ is not contained 
in a vertex stabiliser. Then, one considers an equivariant map from the 
universal covering of $N^3$ to the tree:
to obtain an essential surface, it suffices to take the 
inverse image of midpoints of edges of $T$ and to render it essential in an 
equivariant way. The reader is referred to \cite{CullerShalen} for details.
\end{itemize}

Notice that the essential surface constructed above is \emph{not unique}: 
indeed, even the choice of the equivariant map from the universal covering 
of $N^3$ to $T$ is not unique.

Now we want to establish a sufficient condition for a surface to be associated 
to an ideal point.

Let $S\subset N^3$ be an essential surface with components $S_1,\ldots,S_k$, 
each one separating, so that $N^3_1,\ldots,N^3_{k+1}$ are the complementary 
components of $S$. Let $\mathcal C\subset X(N^3)$ be an algebraic curve and 
$\{\chi_n\}$ be a sequence of characters in $\mathcal C$. Assume that the 
sequence of the restrictions to $N^3_i$, $\{\chi^i_n\}$, converges to the 
irreducible character $\chi^i_{\infty}$. Let $\rho_i\in R(N^3_i)$ be the 
representation whose character is $\chi_{\rho_i}=\chi^i_{\infty}$, for 
$i=1,\ldots, k+1$. 

\begin{Lemma}
\label{lemma:suffideal}
Assume that whenever $S_l= N_i^3\cap N_j^3$, the restriction of $\rho_i$ to 
$S_l$ is \emph{not conjugate} to the restriction of $\rho_j$ to $S_l$.
Then a subsequence of $\{\chi_n\}$ converges to an ideal point $\chi_{\infty}$ 
of $\mathcal C$, and $S$ is a surface associated to this ideal point.
\end{Lemma}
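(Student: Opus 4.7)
The plan is to establish the two assertions in turn. First I will rule out convergence of $\{\chi_n\}$ to any point of $\mathcal{C}$; by compactness of the smooth projective model $\mathcal{C}_0$, this forces a subsequence to converge to a point $\chi_\infty \in \mathcal{C}_0\setminus\mathcal{C}$, i.e.\ to an ideal point. Then I will apply the Culler--Shalen machinery at $\chi_\infty$, show that each $\pi_1(N^3_i)$ fixes a vertex of the Bass--Serre tree $T$, show that adjacent vertices are distinct, and finally recover $S$ from this action.

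For the first step, suppose by contradiction that a subsequence converges to some $\chi_\infty \in \mathcal{C}$, and pick $\rho_\infty \in R(N^3)$ with character $\chi_\infty$. Continuity of restriction gives that $\rho_\infty|_{\pi_1(N^3_i)}$ has character $\chi^i_\infty$; since $\chi^i_\infty$ is irreducible, Lemma~\ref{Lemma:irreduciblerestriction} provides $g_i\in SL(2,\mathbb{C})$ with $g_i\,\rho_\infty|_{\pi_1(N^3_i)}\,g_i^{-1}=\rho_i$. Whenever $S_l = N^3_i\cap N^3_j$, the well-defined restriction $\rho_\infty|_{\pi_1(S_l)}$ is simultaneously conjugated to $\rho_i|_{\pi_1(S_l)}$ by $g_i$ and to $\rho_j|_{\pi_1(S_l)}$ by $g_j$, so the latter two are conjugate, contradicting the hypothesis.

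For the second step, let $P\colon \pi_1(N^3)\to SL(2,F)$ be the tautological representation and $T$ the Bass--Serre tree for the extended valuation $\tilde v$. For every $\gamma\in\pi_1(N^3_i)$ the rational function $\chi_\bullet(\gamma)$ is regular at $\chi_\infty$ with value $\chi^i_\infty(\gamma)$, so $\tilde v(\operatorname{trace}(P(\gamma)))\geq 0$; since irreducibility of characters is Zariski open, $P|_{\pi_1(N^3_i)}$ is irreducible over $F$ and has no fixed end in $T$, and combined with the trace bound this forces $P|_{\pi_1(N^3_i)}$ to fix a vertex $v_i\in T$. The key use of the hypothesis is to prove $v_i\neq v_j$ whenever $S_l=N^3_i\cap N^3_j$: if $v_i=v_j$, a single conjugation carries both $P|_{\pi_1(N^3_i)}$ and $P|_{\pi_1(N^3_j)}$ into $SL(2,R_{\tilde v})$; reducing modulo the maximal ideal produces representations of $\pi_1(N^3_i)$ and $\pi_1(N^3_j)$ with characters $\chi^i_\infty$ and $\chi^j_\infty$, so by irreducibility they are conjugate to $\rho_i$ and $\rho_j$ respectively, yet their restrictions to $\pi_1(S_l)$ must coincide, forcing $\rho_i|_{\pi_1(S_l)}$ and $\rho_j|_{\pi_1(S_l)}$ to be conjugate, again contrary to hypothesis.

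With the $v_i$ pairwise distinct across every $S_l$, the Bass--Serre tree $T_S$ dual to the decomposition of $N^3$ along $S$ admits a $\pi_1(N^3)$-equivariant map to $T$ that collapses no edge; pulling back through this map I construct an equivariant map $\tilde N^3 \to T$ whose preimage of midpoints of edges of $T$ is $S$, and since $S$ is already essential the standard Culler--Shalen essentialisation leaves it unchanged. The main obstacle is this last identification: guaranteeing that the equivariant map can be arranged so that distinct $S_l$'s are not collapsed together nor absorbed when transporting the dual tree $T_S$ into $T$. The distinctness of adjacent $v_i$ is precisely what is needed to avoid collapse, while the freedom in choosing the equivariant map (and then the freedom built into the Culler--Shalen construction) accommodates any subdivision that may occur along the edges of $T$.
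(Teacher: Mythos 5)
Your argument is correct, and its overall architecture is the same as the paper's: rule out convergence inside $\mathcal C$ via the conjugacy contradiction, pass to the Bass--Serre tree, fix a vertex $v_i$ for each $P(\pi_1(N^3_i))$, and recover $S$ by mapping the dual tree of the decomposition into $T$. The one step where you genuinely diverge is the proof that $v_i\neq v_j$ across $S_l$. The paper argues (for $k=1$) that if $v_1=v_2$ then the whole amalgam $\pi_1(N^3_1)*_{\pi_1(S)}\pi_1(N^3_2)$ would stabilise a vertex, contradicting the fact that the point is ideal, and then dismisses $k>1$ with ``the general case follows easily, since each component separates.'' You instead conjugate both vertex stabilisers into $SL(2,R_{\tilde v})$, reduce modulo the maximal ideal (residue field $\mathbb C$) to obtain representations with characters $\chi^i_\infty$ and $\chi^j_\infty$, identify them with $\rho_i$ and $\rho_j$ via Lemma~\ref{Lemma:irreduciblerestriction}, and contradict the non-conjugacy hypothesis on $S_l$. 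Your variant is slightly longer but buys something real: it excludes $v_i=v_j$ edge by edge, which is precisely what the disconnected case $k>1$ requires, whereas the paper's global ``otherwise $x$ would not be ideal'' only directly excludes that \emph{all} the $v_i$ coincide. Both write-ups leave the final identification of the pulled-back surface with $S$ at the same level of detail (essentiality and pairwise non-parallelism of the $S_l$ are what guarantee that eliminating parallel copies yields exactly $S$), so there is no gap there. One minor simplification: to place $P(\pi_1(N^3_i))$ in a vertex stabiliser it suffices that the group is finitely generated and all traces lie in $R_{\tilde v}$ (Serre, Culler--Shalen); the appeal to irreducibility over $F$ and to the absence of fixed ends is not needed.
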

 
\begin{proof}
Assume first that $k=1$, i.e. $S=S_1$ is connected. Up to a subsequence, 
$\{\chi_n\}$ converges to either a point in $\mathcal C$ or to an ideal point.
We want to see that it converges to an ideal point: seeking a contradiction, 
assume that it converges to $\chi_{\infty}\in \mathcal C\subset X(N^3)$. In 
particular $\chi_{\infty}=\chi_{\rho_{\infty}}$ for some 
$\rho_{\infty}\in R(N^3)$ and since $\chi_{\infty}$ restricted to $N^3_i$ is the
irreducible character $\chi_\infty^i$, then $\rho_{\infty}$ restricted to $N^3_i$ 
is conjugate to $\rho_i$ (see Lemma~\ref{Lemma:irreduciblerestriction}). In 
particular the restrictions of $\rho_1$ and $\rho_2$ to $S_1$ are conjugate, 
leading to a contradiction. Thus $\chi_n\to x$, an ideal point. Now consider 
the Bass-Serre tree $T$ associated to this ideal point. By construction, since 
the valuation restricted to $N^3_i$ is non-negative, $P(\pi_1(N^3_i))$ is contained 
in the stabiliser of a vertex $v_i$ of $T$ (where $P$ is the tautological 
representation described above). Notice that $v_1\neq v_2$, otherwise the point 
$x$ would not be ideal. 

Consider the graph of groups $G$, with two vertices, with vertex groups 
$\pi_1(N^3_1)$ and $\pi_1(N^3_2)$ respectively, and with one edge between them, 
with edge group $\pi_1(S)$. This graph is dual to the decomposition along $S$. 
Now construct an equivariant map from $\tilde G$ to $T$, by mapping the $i$-th 
vertex to $v_i$, the edge of $G$ to the unique path in $T$ joining them (note 
that this path can consists of more than one edge).
 Compose it with 
the obvious equivariant map from $\tilde N^3$ to $\tilde G$:
$$
\tilde N^3\to\tilde G\to T.
$$
Taking into account this very action, and given the fact that $S$ is essential, 
it is clear that the surface obtained by taking the inverse image of midpoints 
of edges and eliminating parallel copies is precisely $S$.

The general case follows easily, since each component $S_l$ separates.
\end{proof}

\begin{Remark}
\label{rem:nonconj}
In the hypothesis of Lemma~\ref{lemma:suffideal} the representations $\rho_i$ 
and $\rho_j$ restricted to $S_l$ are not conjugate, but they have the same 
character: $\chi_i\vert_{S_l}=\chi_j\vert_{S_l}$. This follows from the fact 
that this is the case for the restrictions of $\chi_{n}$ to $N_i^3$ and $N_j^3$ and that the corresponding sequences converge.

This situation can occur when the restrictions of $\rho_i$ and $\rho_j$ to 
$S_l$ are reducible, and this is precisely what happens in our applications.
\end{Remark}

\section{The character variety for $C'$}
\label{section:C'}

Recall that, for a given Conway sphere $C$, $C'$ denotes the four-holed sphere
which is the intersection of $C$ with the link exterior. The fundamental group
of $C'$ is a free group of rank $3$. For our purposes, we choose the
presentation
$$\pi_1(C')=\langle \mu_1,\mu_2,\mu_3,\mu_4\mid \mu_1\mu_2\mu_3\mu_4 \rangle$$
where the $\mu_i$'s correspond to the four peripheral elements.

According to \cite{AcunaMontesinos} the $SL(2,{\mathbb C})$-character variety
for a free group of rank $3$ is a hypersurface in ${\mathbb C}^7$, whose
coordinates correspond to the traces of the images of $\mu_i$, $i=1,2,3$,
$\mu_i\mu_j$, $1\le i<j\le3$ and $\mu_1\mu_2\mu_3=\mu_4^{-1}$. 

We will only be interested in characters coming from representations induced
by those of $M_1'$ and $M_2'$ and that potentially extend to the link 
exterior. In particular, when the link is a knot, we have that the traces of
the $\mu_i$'s must be the same. We shall then only need to consider the 
intersection of the character variety with a $4$-plane. Even in the case where
the link has more than one component, we shall only consider this subvariety,
which is again a hypersurface but in ${\mathbb C}^4$. 

The hypersurface obtained by imposing all traces of meridians to be equal is
\begin{equation}
\label{eq:hypersurface}
\mathcal Y=\{(x,y,z,t)\in\mathbb C^4\mid (t^2-(x+y+z-2))^2=(2-x)(2-y)(2-z)\},
\end{equation}
where $t$ represents the trace of the image of $\mu_i$, $i=1,2,3$, and
$\mu_1\mu_2\mu_3=\mu_4^{-1}$, while $x$, $y$ and $z$ those of $\mu_1\mu_2$,
$\mu_1\mu_3$ and $\mu_2\mu_3$ respectively.

One can prove that $\mathcal Y$ is an irreducible hypersurface whose singular set
consists of the point $(-2,-2,-2,0)$ together with three
one-dimensional components. The three singular curves meet at the
character corresponding to the trivial representation and contain the
points $(-2,2,2,0)$,  $(2,-2,2,0)$ and $(2,2,-2,0)$ respectively.

Note that, since $\pi_1(C')$ is a free group, the projection from 
$X(C', SL(2,\mathbb{C}))$ to $X(C', PSL(2,\mathbb{C}))$ is a surjection. 
The points of $\mathcal Y$ we are interested in correspond to characters of 
lifts of parabolic representations in $PSL(2,\mathbb{C})$, where the meridians 
are rotations of angle $\pi$. In particular their holonomies are conjugate to 
the matrix
$$
\pm\begin{pmatrix}
      i & 0 \\
  0 & -i
   \end{pmatrix}.
$$
According to the possible lifts (i.e. choices of signs), the possible values 
for $(x,y,z,t)$ are
$
(-2,-2,-2,0)
$,
$
(-2,2,2,0)
$,
$
(2,-2,2,0)
$ or
$
(2,2,-2,0)
$.
In fact, we shall show in Lemma~\ref{Lemma:liftsmatch} that the point 
$(x,y,z,t)=(-2,-2,-2,0)$ does not occur. We shall need to work in 
neighbourhoods of these points, thus considering $\mathbb C$-analytic 
varieties. We follow the usual notation and call germ at a point the analytic 
variety defined around the point, without specifying the neighbourhood. 

\begin{Lemma}
\label{Lemma:locallyirreducible} 
The analytic germ of the hypersurface $\mathcal Y$ in ${\mathbb C}^4$ of 
equation (\ref{eq:hypersurface}) at the points $(x,y,z,t)=(-2,2,2,0)$, 
$(2,-2,2,0)$ and $(2,2,-2,0)$ is irreducible.
\end{Lemma}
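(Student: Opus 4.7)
The plan is to exploit the symmetry of the defining equation of $\mathcal Y$ in the coordinates $x,y,z$: permuting $(x,y,z)$ induces an automorphism of $\mathbb C^4$ that preserves $\mathcal Y$ and permutes cyclically the three distinguished points. Therefore, it suffices to prove irreducibility of the germ at a single point, say $(-2,2,2,0)$, and the other two cases follow by symmetry.

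I would then translate coordinates by setting $\xi = x+2$, $\eta = y-2$, $\zeta = z-2$, $\tau = t$, so that the point becomes the origin. Substitution turns equation (\ref{eq:hypersurface}) into
\[
f(\xi,\eta,\zeta,\tau) := (\tau^2 - \xi - \eta - \zeta)^2 - (4-\xi)\eta\zeta = 0.
\]
Expanding, $f$ is monic of degree $2$ in $\xi$, and $f(\xi,0,0,0) = \xi^2$, so $f$ is $\xi$-regular of order $2$ and is already a Weierstrass polynomial over $\mathcal O := \mathcal O_{(\mathbb C^3,0)}$, the ring of convergent power series in $(\tau,\eta,\zeta)$. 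By the Weierstrass preparation theorem, the germ $\{f=0\}$ at the origin is irreducible if and only if $f$ is irreducible in $\mathcal O[\xi]$; and since $f$ is monic of degree two, this happens if and only if its discriminant is not a square in $\mathcal O$.

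A direct computation (completing the square in $\xi$, or applying the formula $b^2-4c$) gives the discriminant as
\[
\Delta \;=\; \eta\zeta\bigl[(\eta+4)(\zeta+4) - 4\tau^2\bigr].
\]
The bracketed factor equals $16$ at the origin, hence is a unit of $\mathcal O$, and every unit in an analytic local ring has a square root; so $\Delta$ is a square in $\mathcal O$ if and only if $\eta\zeta$ is. Since $\mathcal O$ is a UFD and $\eta$, $\zeta$ are distinct prime elements, the product $\eta\zeta$ is not a square, which yields the desired irreducibility. The main obstacle is the discriminant computation itself: the argument only works because of the clean factorisation of $\Delta$ in which all the $\tau$-dependence lands inside a unit factor, so that the non-square $\eta\zeta$ can be isolated.
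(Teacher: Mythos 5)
Your argument is correct, and it takes a genuinely different route from the paper's. The paper treats the germ at $(-2,2,2,0)$ by exhibiting it as the image of a finite surjective morphism from an auxiliary hypersurface $\mathcal H\subset\mathbb C^4$ (via $(w,u,v,t)\mapsto(u^2+v^2+uvw,u^2,v^2,t)$) that is smooth at the two preimages of the distinguished point; this bounds the number of local branches by two, and an involution of $\mathcal H$ commuting with the morphism and exchanging the two preimages identifies the two branches. You instead stay inside the local ring: after a linear translation the defining function is a monic quadratic Weierstrass polynomial in $\xi$ over $\mathcal O=\mathbb C\{\eta,\zeta,\tau\}$, and irreducibility reduces to the discriminant not being a square. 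I verified the computation: writing $A=\tau^2-\eta-\zeta$, one gets $f=\xi^2+(\eta\zeta-2A)\xi+(A^2-4\eta\zeta)$ and $\Delta=b^2-4c=\eta\zeta(\eta\zeta-4A+16)=\eta\zeta\bigl[(\eta+4)(\zeta+4)-4\tau^2\bigr]$, whose bracketed factor is a unit of value $16$ at the origin, hence a square in $\mathcal O$, while $\eta\zeta$ is a product of two non-associate primes and so cannot be a square in the UFD $\mathcal O$. The one imprecision is the asserted equivalence ``germ irreducible iff $f$ irreducible in $\mathcal O[\xi]$'': the `only if' direction fails when $f$ is the square of a linear Weierstrass polynomial (the germ is then still irreducible while the discriminant vanishes, hence is a square), but you only invoke the `if' direction, so nothing breaks. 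On balance your proof is more elementary and self-contained; the paper's construction amounts to a partial normalization and displays the unibranch structure of the germ via an explicit finite cover, but for the sole purpose of Lemma~\ref{Lemma:locallyirreducible} the discriminant argument is entirely sufficient.
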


\begin{proof}

We shall analyse the germ at the point $(-2,2,2,0)$: for symmetry reasons, it 
is sufficient to consider this case.
We apply the following algebraic change of variables: 
$\xi=t^2-x-2$,
$\upsilon=2-y$,
$\zeta=2-z$ and
$\tau=t$
so that the coordinates of our point become $(0,0,0,0)$ and the equation
\begin{equation}
\mathcal Y=\{(\xi,\upsilon,\zeta,\tau)\in\mathbb C^4\mid 
(\xi-\upsilon-\zeta)^2=(\xi-\tau^2+4)\upsilon\zeta \}.
\end{equation}

We now consider the algebraic hypersurface in $\mathbb C^4$ defined by the 
equation
\begin{equation}
\mathcal H=\{(w,u,v,t)\in\mathbb C^4\mid u^2+v^2+uvw-w^2-t^2+4=0\}.
\end{equation}
We can define a regular morphism $\mathcal H\longrightarrow \mathcal Y$ as
follows: 
$$(w,u,v,t)\mapsto(\xi=u^2+v^2+uvw,\upsilon=u^2,\zeta=v^2,\tau=t).$$
Note that we have $\xi-\upsilon-\zeta=uvw$ and 
$\xi-\tau^2+4=u^2+v^2+uvw-t^2+4=w^2$. It is easy to check that this morphism is
finite to one, hence proper, and surjective. The preimage in $\mathcal H$ of
$(0,0,0,0)$ consists of two points: $(\pm 2,0,0,0)$. It is trivial to see that 
these are smooth points of $\mathcal H$, for the derivative with respect to $w$
is non zero. It is now clear that the analytic germ of $\mathcal Y$ at
$(-2,2,2,0)$ has at most two components. Irreducibility follows from the fact 
that the involution $(w,u,v,t)\mapsto(-w,u,-v,t)$ acting on $\mathcal H$ 
exchanges the two points $(\pm 2,0,0,0)$ without changing the regular morphism.

\end{proof}

\section{Cutting off along Conway spheres}
\label{section:cutting}

Cut off the exterior of the link along a family of pairwise non-parallel Conway 
spheres $C_1,\ldots,C_k$, obtaining components $M_1,\ldots,M_{k+1}$. Let 
$\mathcal O_i$ denote the orbifold with underlying space $M_i$, branching locus 
$L\cap M_i$. We require that the ramification indices are equal to $2$ along 
the components that meet some Conway sphere.

\begin{Assumption}
The orbifolds $\mathcal O_1, \ldots,\mathcal O_{k+1}$ are geometric, either 
hyperbolic with finite volume or Seifert fibred with hyperbolic base.
\end{Assumption}

We shall show in Section~\ref{section:thm1} that under the hypotheses of 
Theorem~\ref{theorem:onesphere} this assumption is satisfied for an appropriate 
choice of the ramification. Note that, under the hypotheses of 
Theorem~\ref{theorem:morespheres}, all orbifolds $\mathcal O_i$ (where the 
orders of ramification are those that determine the Bonahon-Siebenmann family) 
are geometric by definition, but the Seifert fibred ones do not have 
necessarily a hyperbolic base. We shall assume that this does not happen, and 
we shall see later how to avoid this particular situation. Note that if the 
link is a knot, all bases of Seifert fibred components must be hyperbolic.

Let $C_l$ be a Conway sphere and $M_i$ a component adjacent to it. Let us 
denote, as usual, $C_l'$ and $M'_i$ their respective intersections with the 
exterior of the link. The holonomy of the hyperbolic structure of 
$\mathcal O_i$, or of its hyperbolic base, if $\mathcal O_i$ is Seifert fibred, 
restricts to a $PSL(2,\mathbb{C})$-representation of $C'_l$ that, up to 
conjugacy, maps each $\mu_i$ to an element of the form
\begin{equation}\label{meridian}
\pm\begin{pmatrix}
    i & * \\
    0 & - i
   \end{pmatrix}.
\end{equation}
The character of its lift to $SL(2,\mathbb C)$  will be denoted by $\chi_0$ 
and has coordinates 
$(x,y,z,t)=
(-2,-2,-2,0)
$,
$
(-2,2,2,0)
$,
$
(2,-2,2,0)
$ or
$
(2,2,-2,0)
$.
In fact the case $(-2,-2,-2,0)$ will be ruled out in 
Lemma~\ref{Lemma:liftsmatch}. Notice that $\chi_0$ is \emph{reducible}, in 
particular there might be several non conjugate representations of $C'_l$ whose 
characters are $\chi_0$.

Let $\chi_i\in X(M_i')$ be the character of a lift of the representation 
induced by the holonomy of the complete hyperbolic structure on $\mathcal O_i$, 
or the hyperbolic structure of its base when it is Seifert fibred. The 
existence of the lift is due to \cite{culler}. Recall that, even if $\chi_0$ is 
reducible, $\chi_i$ is \emph{irreducible}.

When $C_l$ is a component of $\partial M_i$, let 
$$
r_{il}: X(M_i')\to X(C_l')
$$ 
denote the restriction map.

\begin{Lemma}
\label{Lemma:liftsmatch} 
The lifts $\chi_i\in X(M_i')$, for $i=1,\ldots,k+1$ can be chosen so that
$$
r_{il}(\chi_i)=r_{jl}(\chi_j),
$$
whenever $M_i\cap M_j= C_l$. 

In addition, the coordinates of $r_{il}(\chi_i)=r_{jl}(\chi_j)$ are 
$(x,y,z,t)=
(-2,2,2,0)
$,
$
(2,-2,2,0)
$ or
$
(2,2,-2,0)
$ (i.e. the case $(-2,-2,-2,0)$ does not occur).
\end{Lemma}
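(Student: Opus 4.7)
The plan is to produce the lifts in two movements: first secure any $SL(2,\mathbb{C})$-lift of each holonomy via Culler's theorem, then twist these lifts by classes in $H^1(M_i';\mathbb{Z}/2)$ to enforce both conclusions simultaneously.

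\emph{Step 1 (initial lifts and the four candidate points).} For each $i$, Culler's theorem \cite{culler} lifts the $PSL(2,\mathbb{C})$-holonomy $\bar\rho_i$ (of $\mathcal{O}_i$, or of its hyperbolic base if $\mathcal{O}_i$ is Seifert fibred) to a homomorphism $\rho_i:\pi_1(M_i')\to SL(2,\mathbb{C})$, giving an initial $\chi_i\in X(M_i')$. Restricted to $\pi_1(C_l')$, and after conjugation, one has $\rho_i(\mu_j)=\epsilon_j\left(\begin{smallmatrix} i & b_j \\ 0 & -i\end{smallmatrix}\right)$ with $\epsilon_j\in\{\pm 1\}$, and a direct matrix computation yields $\mathrm{tr}\,\rho_i(\mu_j)=0$ and $\mathrm{tr}\,\rho_i(\mu_j\mu_k)=-2\epsilon_j\epsilon_k$, independently of the translations $b_j$. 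Hence $r_{il}(\chi_i)$ lands at one of the four listed points of $\mathcal Y$, determined solely by the sign pattern $(\epsilon_1,\ldots,\epsilon_4)$ modulo a global sign.

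\emph{Step 2 (matching).} Two $SL$-lifts of $\bar\rho_i$ differ by a class $\alpha\in H^1(M_i';\mathbb{Z}/2)$, and such a twist alters the restricted sign pattern on $C_l'$ by the pullback $(r_{il})^*\alpha\in H^1(C_l';\mathbb{Z}/2)$. If $\delta_l$ denotes the initial sign-pattern discrepancy on $C_l$, matching amounts to solving the $\mathbb{Z}/2$-linear system $(r_{il})^*\alpha_i+(r_{jl})^*\alpha_j=\delta_l$ over the Conway spheres. I would solve it inductively along a spanning tree of the dual graph of the decomposition of $M'$ along $C_1,\ldots,C_k$: each newly adjoined piece is attached across a single sphere whose discrepancy is absorbed by its free twist; the residual cocycle coming from the remaining edges lives in $H^2(M';\mathbb{Z}/2)$, and I would show it vanishes by Mayer--Vietoris, using the existence of a global $SL(2,\mathbb{C})$-lift of the hyperbolic holonomy of the whole link.

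\emph{Step 3 (ruling out $(-2,-2,-2,0)$).} This point is the unique sign pattern with $\epsilon_1=\epsilon_2=\epsilon_3=\epsilon_4$ up to a global sign. Since $\epsilon:\pi_1(M_i')\to\{\pm 1\}$ is a homomorphism, it is constant on conjugacy classes, so two meridians of $C_l$ cobounding an arc of $L\cap M_i$ necessarily receive the same sign from the $M_i$ side; equivalently, the admissible sign patterns on the $M_i$ side are controlled by the partition of the four punctures into conjugacy classes inside $\pi_1(M_i')$. I would argue that, after the matching of Step~2, the residual twisting freedom (in $H^1$-classes acting trivially on the other Conway spheres) can always be used to separate at least one pair of meridians on the offending $C_l$, forcing the common matched character into one of the three acceptable points.

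\emph{Main obstacle.} The delicate point is Step~3: one must exclude the pathological situation in which, on some $C_l$, the four meridians are pairwise conjugate inside both adjacent pieces, since then the only matched character is $(-2,-2,-2,0)$. I expect hyperbolicity of $L$, essentiality of $C_l$ in the Bonahon--Siebenmann decomposition, and the assumption that each $\mathcal{O}_i$ is hyperbolic of finite volume or Seifert fibred with hyperbolic base to conspire to rule out this configuration, by constraining the pairings of the four punctures induced from the two sides to be compatible enough that a non-trivial sign pattern is simultaneously achievable.
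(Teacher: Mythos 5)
Your Step 1 is correct and coincides with the paper's setup, but the two substantive claims --- that the lifts match across every $C_l$, and that $(-2,-2,-2,0)$ cannot occur --- are exactly where the argument has gaps. In Step 2, any re-lift of $\rho_j$ differs by a homomorphism $\pi_1(M_j')\to\mathbb{Z}/2$, which is necessarily constant on meridians of the same arc of $L\cap M_j$; hence the sign changes you can realise on the four punctures of $C_l$ from the $M_j$ side are only those constant on the arc-pairing induced by $M_j$. Modulo a global sign (which is all the character $(x,y,z,t)$ sees), that is a $2$-element subgroup of the $4$-element group of possible discrepancies, so ``the discrepancy is absorbed by its free twist'' is not automatic: solvability depends on how the two adjacent pieces pair up the four punctures, which you do not control. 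Note also that the dual graph of the decomposition is a tree (every $C_l$ separates $\mathbf{S}^3$), so there is no residual $H^2$ obstruction to kill by Mayer--Vietoris --- the entire difficulty sits in the single-edge step you treat as free; and a global lift of the holonomy of $\mathbf{S}^3\setminus L$ says nothing about lifts of the orbifold holonomies of the $\mathcal{O}_i$, which are different representations. Step 3 you explicitly leave open (``I expect \dots to conspire''), and it is the actual content of the second assertion.

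The missing ingredient, which is what the paper uses, is a \emph{canonical} sign convention coming from a fixed orientation of $L$: for each meridian $\mu$ one sets $\epsilon=\pm1$ according to whether $\mu$ and the oriented arc it encircles induce the standard orientation of $\mathbf{S}^3$. One then verifies that this prescribes a genuine lift by checking the Wirtinger relations of $M_i'$: the crossing relations hold because coherently oriented meridians of the same arc are conjugate and receive the same sign, and the vertex relation $\mu_1\mu_2\mu_3\mu_4=1$ at each Conway sphere holds because the algebraic intersection number $L\cdot C_l=0$ forces exactly two $+1$'s and two $-1$'s among $\epsilon_1,\ldots,\epsilon_4$, so the product of the four lifted matrices is $+I$. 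With this convention the matching across $C_l$ is automatic --- both adjacent pieces read the signs off the same oriented arcs, and $t=0$ together with $(x,y,z)$ determines the character --- and $(-2,-2,-2,0)$ is excluded because it would require at least three of the four strands to cross $C_l$ in the same direction, again contradicting $L\cdot C_l=0$. This one homological input replaces both your linear system and your unproved Step 3.
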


\begin{Remark}
\label{rem:warningnotconj}
We will show in Remark~\ref{rem:no match} that the restricted representations 
are not conjugate, even if they have the same character, 
cf.\ Remark~\ref{rem:nonconj}. This will be used to apply 
Lemma~\ref{lemma:suffideal}. 
\end{Remark}

\begin{proof}[Proof of Lemma~\ref{Lemma:liftsmatch}]
Two different lifts may differ by a change of sign at the meridians of the arcs 
$L\cap M_i$. To make a consistent choice, we fix an \emph{orientation} of the 
components of $L$. 

For each meridian $\mu\in\pi_1(M_i')$, choose an isometry 
$a\in PSL(2,\mathbb C)$ that maps 
the oriented line from $0$ to $\infty$
to the end-points of the oriented axis of 
$\rho_i(\mu)$, in the upper half 
space model for $\mathbf H^3$. In particular, a lift $\rho_i$ of the holonomy 
must satisfy:
\begin{equation}
a^{-1}\rho_i(\mu) a=\epsilon 
  \begin{pmatrix}
  i & 0 \\   0 & -i 
  \end{pmatrix}
\qquad\textrm{ for some } \epsilon= \pm 1.
 \label{eqn:orientations}
\end{equation}
We fix the following convention: we choose $\epsilon=1$ if the orientations of 
$L$ and $\mu$ induce the standard orientation of $\mathbf S^3$ and 
$\epsilon=-1$ otherwise, cf.\ Figure~\ref{fig:orientations}. A different choice 
of isometry $a\in PSL(2,\mathbb C)$ differs by an isometry that preserves the 
oriented line from $0$ to $\infty$, hence it commutes with the isometry in 
(\ref{eqn:orientations}). Note also that the inverse in $SL(2,\mathbb C)$ of 
the matrix in the above identity coincides with its opposite, which is 
in accordance with the chosen convention.

\begin{figure}
\begin{center}
{
\psfrag{m}{$\mu$}
\psfrag{0}{}
\psfrag{i}{$L                                           $}
\psfrag{ep}{$\epsilon=+1$}
\psfrag{em}{$\epsilon=-1$}
\includegraphics[height=4cm]{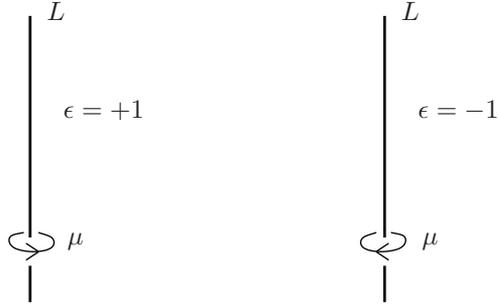}
}
\end{center}
\caption{The choice of $\epsilon=\pm 1$ according to the orientation.}
\label{fig:orientations}
\end{figure}

We need to check that this choice is consistent. We identify $M_i'$ with the 
exterior of an embedded graph in $\textbf S^3$ so that its vertices are 
$4$-valent, and they correspond precisely to the Conway spheres in 
$\partial M_i$. In this way, one can compute the fundamental group with the 
Wirtinger method, and we obtain that $\pi_1(M_i')$ is generated by meridians of 
the arcs, and there are two kinds of relations:
\begin{itemize}
\item[$r_{crossings}$] The usual conjugacy relations corresponding to 
transverse crossings of a projection, as for links.
\item[$r_{vertices}$] Relations corresponding to the vertices, i.e. to Conway 
spheres. Using the presentation at the beginning of Section~\ref{section:C'},
for $\mu_1,\mu_2,\mu_3,\mu_4\in\pi_1(C'_l)$, the relation is 
$\mu_1\mu_2\mu_3\mu_4=1$. 

\end{itemize}
As for links, this family of relations is not minimal, but they generate all 
relations. Let us check compatibility. First we deal with $r_{crossings}$, 
which are relations of conjugations between meridians. 
If $\mu,\mu'\in\pi_1(M_i')$ are two meridians for the same component of 
$L\cap M_i$ with the same orientation, then $\mu=\gamma^{-1}\mu'\gamma$ for 
some $\gamma\in \pi_1(M_i')$. In particular, for $a\in PSL(2,\mathbb C)$ as in 
Equation~\ref{eqn:orientations}, 
$$
a^{-1}\rho_i(\mu) a =(\rho(\gamma) a)^{-1}\rho_i(\mu') \rho(\gamma) a.
$$ 
This shows compatibility for relations $r_{crossings}$.

For $r_{vertices}$, look at the restriction of $\rho_i$ to $C'_l$. Up to 
conjugation, we may assume that the point in $\partial_{\infty}(\mathbf H^3)$ 
fixed by $\pi_1(C'_l)$ is $\infty$, hence 
\begin{equation}
 \label{eq:restriction}
\rho(\mu_i)= \epsilon_i \begin{pmatrix}
  i & * \\   0 & -i
  \end{pmatrix}, \qquad \epsilon_i=\pm 1.
\end{equation}
Since the intersection number between $L$ and  $C_l$ is zero, the set 
$\{\epsilon_1,\epsilon_2,\epsilon_3,\epsilon_4\}$ contains precisely 
twice $+1$ and twice $-1$. Consequently
$$
\rho(\mu_1)\rho(\mu_2)\rho(\mu_3)\rho(\mu_4)=\begin{pmatrix}
  i^4 & 0 \\   0 & (-i)^4
  \end{pmatrix}=\begin{pmatrix}
  1 & 0 \\   0 & 1
  \end{pmatrix}.
$$

Thus, the lift is coherent with all the relations of the Wirtinger 
representation (i.e.\ the lifted representation maps the relators to the 
identity matrix, instead of minus the identity).

We look again  at the restriction of $\rho_i$ to $C'_l$. Since $x$ is the trace 
of $\mu_1\mu_2$, from our convention and formula (\ref{eq:restriction}), it 
follows easily that $x=-2$ if the arcs of $\mu_1$ and $\mu_2$ cross $C_l$ in 
the same direction, and $x=2$ if they do it in opposite directions. Of course 
the same holds true for $y$ and $z$ with the respective oriented arcs. Since 
the values of $x$, $y$, $z$ and $t=0$ determine the character, this proves the 
compatibility.

To prove that the case $(-2,-2,-2,0)$ does not occur, notice that it requires 
that $L$ crosses at least three times $C_l$ in the same direction; this is 
impossible, since the intersection number between $L$ and  $C_l$ is zero.
\end{proof}

As in previous section, $\mathcal Y$ denotes the subset of $X(C')$ obtained by 
requiring that the traces of all meridians are the same. We use the notation 
$\mathcal Y_l\subset X(C'_l)$ to refer to the $l$-th component $C_l$. Recall 
that $\chi_0\in \mathcal Y_l$ denotes the character of $X(C_l')$ such that 
$r_{il}(\chi_i)=\chi_0$.

\begin{Lemma}\label{lemma:irrdeformations}
Let $\{\chi_s\in \mathcal Y_l\}_{s\in[0,\varepsilon)}$ be a continuous 
deformation of $\chi_0$ for which $t(s)\neq 0$ for all $s\in (0,\varepsilon)$. 
Assume that $\chi_s=r_{il}(\chi_{i,s})$ for all $s\in [0,\varepsilon)$, where 
$\{\chi_{i,s}\in X(M_i')\}_{s\in[0,\varepsilon)}$ is a continuous deformation 
of $\chi_i$. Then there exists an $s\in (0,\varepsilon)$ such that $\chi_s$ is 
irreducible.
\end{Lemma}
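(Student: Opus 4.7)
The plan is to argue by contradiction. Suppose that $\chi_s$ is reducible for every $s\in[0,\varepsilon)$, after possibly shrinking $\varepsilon$.

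Among the three one-dimensional singular components of $\mathcal Y$ described earlier, only one passes through $\chi_0 = (-2,2,2,0)$, namely
$$\mathcal R_a = \{(t^2-2,\,2,\,2,\,t) : t\in\mathbb C\}.$$
By continuity, $\chi_s\in\mathcal R_a$ for all $s$, so $y(s)\equiv 2$ and $z(s)\equiv 2$. The first step is thus to reduce the problem to analyzing the pulled-back locus
$$Z = \{\chi\in X(M_i') : y(r_{il}(\chi))=2 \text{ and } z(r_{il}(\chi))=2\}$$
as a germ at $\chi_i$.

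The heart of the argument is to show that, near $\chi_i$, $Z$ is contained in the orbifold subvariety $X(\mathcal O_i)\subset X(M_i')$ cut out by $t=0$ (i.e.\ $\rho(\mu)^2=-I$ on the meridians of the arcs meeting $C_l$). Once this is established, $\chi_{i,s}\in Z$ forces $t(s)\equiv 0$ for $s$ near $0$, contradicting the hypothesis $t(s)\neq 0$ for $s\in(0,\varepsilon)$.

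The geometric reason for $Z\subset\{t=0\}$ is that the equations $y=2$ and $z=2$ assert that the two independent translation generators $\mu_1\mu_3$ and $\mu_2\mu_3$ of the pillow fundamental group at the $C_l$-cusp remain parabolic, which forces that cusp to remain complete (Euclidean pillow). The Gauss--Bonnet theorem for the pillow---a sphere with four cone points whose Euclidean structure requires the four cone angles to sum to $4\pi$---then forces each cone angle to equal $\pi$ (recall that inside $\mathcal Y$ all four meridian traces coincide, so all four cone angles coincide), which translates algebraically into $t=0$.

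To make the inclusion $Z\subset\{t=0\}$ rigorous, I would compute the differentials of $y$, $z$ and $t$ on $T_{\chi_i}X(M_i')$ using Thurston's deformation theorem. The latter parameterizes $X(M_i')$ locally around $\chi_i$ by the cusp moduli (one complex parameter per cusp of $\mathcal O_i$, including the pillow cusp at $C_l$) and the cone-angle parameters of the branching components meeting $M_i$. Cusp-modulus directions preserve the parabolicity of the two translations, so they contribute zero to $d(y-2)$ and $d(z-2)$; by contrast, the cone-angle directions at $C_l$ contribute nonzero values to each. Consequently, imposing $dy=dz=0$ on $T_{\chi_i}X(M_i')$ kills those cone-angle directions, forcing $dt=0$ and hence $Z\subset\{t=0\}$ as germs at $\chi_i$. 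In the Seifert-fibred case the analogous analysis is performed on the hyperbolic base $2$-orbifold of $\mathcal O_i$.

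The main obstacle is thus the deformation-theoretic computation showing that cone-angle deformations at $C_l$ genuinely disturb both $y=2$ and $z=2$; this is precisely where the hypothesis that $C_l$ is a Conway sphere meeting the hyperbolic (or hyperbolic-base Seifert) orbifold $\mathcal O_i$ enters the argument.
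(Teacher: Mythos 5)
Your first reduction is sound and coincides with the first half of the paper's own argument: writing the reducible restrictions in upper--triangular form and using continuity of the eigenvalue functions $\lambda_j(s)$, one sees that a path of reducible characters in $\mathcal Y_l$ through $(-2,2,2,0)$ must satisfy $y\equiv z\equiv 2$ and $x=t^2-2$. The gap is in how you pass from there to a contradiction. You propose to prove the germ inclusion $Z=\{y=z=2\}\subset\{t=0\}$ at $\chi_i$ by a tangent--space computation, but showing that $dy=dz=0$ forces $dt=0$ on $T_{\chi_i}X(M_i')$ would only prove that $Z$ is \emph{tangent} to $\{t=0\}$, not contained in it; a germ of curve whose tangent lies in $\ker(dt)$ can perfectly well carry a nonconstant $t$ (e.g.\ $t=u^2$), and second--order behaviour is exactly what is at stake here -- already on the reducible curve itself the function $x+2=t^2$ vanishes to second order. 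Moreover, the parametrisation of $X(M_i')$ near $\chi_i$ by ``cusp moduli plus cone angles'' is unjustified: $M_i'$ is not a cusped hyperbolic manifold ($\partial M_i'$ contains a component of genus at least two), $\chi_i$ is not known to be a smooth point of $X(M_i')$, and the paper only establishes such local control for the smaller variety $X(\mathcal O_i')$ (Lemma~\ref{Lemma:dim3}), by a two--sided dimension bound rather than by Thurston's Dehn surgery theorem alone; the hypothesis of the present lemma, however, allows $\chi_{i,s}$ to move in all of $X(M_i')$, where extra deformation directions arise (for instance from meridians of closed components of $L\cap M_i$).

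What is missing is the elementary mechanism the paper actually uses, which exploits the hypothesis that $\chi_s$ extends over $M_i'$ in a completely different way: project $\chi_{i,s}$ to $PSL(2,\mathbb C)$ and re--lift to $SL(2,\mathbb C)$ after twisting by a homomorphism $\pi_1(M_i')\to\mathbb Z/2$ that is nontrivial on exactly two of the four meridians of $C_l'$. The new lift is again a path of reducible restrictions, now starting at, say, $(2,-2,2,0)$, so the same eigenvalue argument yields $x^2\equiv z^2\equiv 4$ with $y^2$ nonconstant; since $x^2$, $y^2$, $z^2$ do not depend on the choice of lift, this contradicts $y^2\equiv 4$ obtained from the original lift (the cases $(2,2,-2,0)$ and $(-2,-2,-2,0)$ are handled similarly). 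No deformation theory of $\mathcal O_i$ is needed. As written, your proof has a genuine gap at its central step.
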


\begin{proof}

Assume by contradiction that all $\chi_s$ are reducible. Since $\chi_i$ is 
irreducible, and the set of irreducible characters is Zariski open, 
$\chi_{i,s}$ is irreducible $\forall s\in [0,\varepsilon)$. Thus, by 
Lemma~\ref{Lemma:irreduciblerestriction}, the deformation 
$\{\chi_{i,s}\in X(M_i')\}_{s\in[0,\varepsilon)}$ lifts to a continuous 
deformation of representations in $R(M_i')$. By considering its restriction to 
$\pi_1(C_l')$, we get, for each $s$, a (reducible) representation 
$\rho_{0,s}\in R(C_l')$ whose character is $\chi_s$, and which depends 
continuously on $s$.

Up to symmetry, we may assume that the coordinates of $\chi_0$ are 
$(x,y,z,t)=(-2,2,2,0)$. We have
$$
\rho_{0,s}(\mu_i)=
\begin{pmatrix}
    \lambda_i(s) & * \\
    0 & \lambda_i(s)^{-1}
   \end{pmatrix}
$$
and 
$\lambda_1(0)=\lambda_2(0)=\frac1{\lambda_3(0)}=\frac1{\lambda_4(0)}=\pm i$.
Recall that we are working in ${\mathcal Y}$, where the traces of the 
$\rho_{0,s}(\mu_i)$ are all the same. As a consequence,
$\lambda_i(s)+\lambda_i(s)^{-1}=\lambda_j(s)+\lambda_j(s)^{-1}$ for all $i$ and
$j$. This means that $\lambda_i(s)=\lambda_j(s)^{\pm 1}$ for all $i$, $j$ and 
$s\in (0,\varepsilon)$. For small $\varepsilon>0$, by continuity we must have 
$$
\lambda_1(s)=\lambda_2(s)=\frac1{\lambda_3(s)}=\frac1{\lambda_4(s)},
$$
for $s$ in a neighbourhood of $0$. By the relation between the 
$\lambda_i(s)$'s, we get $y^2(\chi_s)=z^2(\chi_s)=4$ and $x^2(\chi_s)$ is 
non-constant, as $t$ is non-constant either.

 We project the characters $\chi_{i,s}\in X(M_i')$ to $PSL(2,\mathbb C)$ and
we lift them again to $SL(2,\mathbb C)$, continuously on $s$. 
All 
possible different lifts can be realised by maps from $\pi_1(M'_i)$ to 
$\mathbb Z/2$, by composing one given representation with an abelian 
representation into 
$$
\left\{\pm \begin{pmatrix} 1 & 0 \\ 0 & 1 \end{pmatrix}\right\}.
$$
One can easily prove the existence of maps $\pi_1(M'_i)\to \mathbb Z/2$
that are non trivial on two meridians of $C_l'$ and are trivial on 
the other two meridians of $C_l'$. This will change the lift to one with 
coordinates $(2,-2,2,0)$, $(2,2,-2,0)$ or $(-2,-2,-2,0)$ (notice that here we 
only extend the character to $M_i'$ not to the whole $\mathbf S^3\setminus L$, 
and therefore the case$(-2,-2,-2,0)$ can occur). Assume first that the 
coordinates are $(x,y,z,t)=(2,-2,2,0)$. The previous discussion implies 
that $x^2(\chi_s)=z^2(\chi_s)=4$ and $y^2(\chi_s)$ is non-constant, hence a 
contradiction, because $x^2$, $y^2$ and $z^2$ are functions that do not depend 
on the lift from $PSL(2, \mathbb{C})$ to $SL(2, \mathbb{C})$.

The case $(x,y,z,t)=(2,2,-2,0)$ being analogous, we next assume that 
$(x,y,z,t)=(-2,-2,-2,0)$. We repeat the construction of the representation of 
$\rho_{0,s}$ as above, with the difference that now 
$$
\lambda_1(s)=\lambda_2(s)=\lambda_3(s)=\lambda_4(s),
$$ 
which implies that $x^2(\chi_s)=y^2(\chi_s)=z^2(\chi_s)$, leading again to a
contradiction.
\end{proof}

\section{A unique Conway sphere}
\label{section:thm1}

The aim of this section is to study the case when the link admits a unique
Conway sphere $C=C_1$. With the same notation as in the previous section, we 
shall denote by ${\mathcal O}_i$, $i=1,2$, the two orbifolds of the
decomposition. Since the Bonahon-Siebenmann family must be contained in
$\{C\}$, we see that the two orbifolds ${\mathcal O}_i$ are geometric. Note, 
moreover, that if both orbifolds are Seifert, their fibrations cannot
match, for else the base of the global fibration would be large
 and the link would then 
admit some other Conway sphere. In particular $C$ is precisely the only element
of the Bonahon-Siebenmann family.

\begin{Lemma}
\label{lemma:euclidean}
Assume ${\mathcal O}_i$ is a Seifert fibred orbifold, with Euclidean base. Then
the associated tangle consists of two straight vertical arcs plus an unknotted 
circle isotopic to the equator.
\end{Lemma}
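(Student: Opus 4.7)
The plan is to reduce the lemma to a problem about 3-manifolds by passing to the 2-fold cover of $M_i$ branched along the tangle $L\cap M_i$.

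Let $\tilde M_i$ denote this 2-fold branched cover. Since all branch indices of $\mathcal O_i$ are equal to 2, the cover is well-defined, and $\tilde M_i$ inherits a genuine (non-orbifold) Seifert fibration whose base $\tilde{\mathcal B}$ is a degree-2 cover of $\mathcal B$ and is therefore again Euclidean. Because $\partial M_i$ is a 2-sphere meeting $L$ in 4 points, the boundary $\partial\tilde M_i$ is the 2-fold cover of $S^2$ branched over 4 points, namely the torus $T^2$. So $\tilde M_i$ is a Seifert fibered 3-manifold with Euclidean base and a single torus boundary.

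Next I would invoke the classification of such manifolds. Up to homeomorphism there is essentially one Seifert fibered 3-manifold with Euclidean base and a single torus boundary: the twisted $I$-bundle $K\mathbin{\tilde\times} I$ over the Klein bottle, whose base is the Möbius band. The other manifolds with Euclidean base ($T^2\times I$, $S^1$-bundles over $D^2(2,2)$, etc.) either have at least two torus boundaries or incompatible boundary data, and so are ruled out. Hence $\tilde M_i \cong K\mathbin{\tilde\times} I$.

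Then I would identify the deck involution $\tau$ of the branched covering $\tilde M_i \to M_i$. It has quotient underlying space $B^3$ and its fixed-point set meets the boundary torus in exactly 4 points (namely the preimages of $L\cap C$). Fibre-preserving involutions on $K\mathbin{\tilde\times} I$ can be enumerated by their action on the base Möbius band and on the fibre circle; the constraints ``quotient is a 3-ball'' and ``exactly 4 fixed points on $\partial$'' single out $\tau$ up to conjugacy. A direct computation of its fixed-point set projected to $M_i = B^3$ yields two disjoint arcs connecting the 4 points of $L\cap C$ in pairs, together with one embedded circle in the interior separating the two pairs. The two arcs lift to Seifert fibres of $\tilde M_i$, so they appear as two parallel straight vertical arcs in the ball; the circle bounds a disc in $B^3$ (meeting the arcs in two points), so it is unknotted, and because it separates the pairs it is isotopic to the equator of $C$. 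This is exactly the claimed description.

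The main obstacle will be showing that the deck involution on $K\mathbin{\tilde\times}I$ is essentially unique up to conjugation, since $K\mathbin{\tilde\times} I$ does admit several involutions with small-dimensional fixed set. The critical ingredients that cut it down are compatibility with the Seifert structure (forced by the fact that $\tau$ lifts from an orbifold Seifert fibration on $\mathcal O_i$) together with the prescribed count of boundary fixed points. Once $\tau$ is pinned down, reading off the shape of its fixed set in $B^3$ is immediate from the product-like picture of $K\mathbin{\tilde\times} I$ and its natural Euclidean geometry.
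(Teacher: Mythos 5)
Your strategy is genuinely different from the paper's: the paper works downstairs, classifying directly the compact Euclidean $2$-orbifolds that can serve as the base of a fibration of the ball (there are exactly two — a mirrored rectangle with two dihedral corners and a mirrored bigon with an interior cone point of order $2$ — and each turns out to be the base of one of the two fibrations of the stated tangle), whereas you pass to the double branched cover and invoke the classification of Seifert fibred $3$-manifolds with Euclidean base and a single torus boundary. The reduction to $K\mathbin{\tilde\times}I$ is sound, modulo two small points you should make explicit: the cover is orientable (this is what excludes $Mb\times S^1$), and the induced base is a degree-$1$ or degree-$2$ cover of the original base, Euclidean either way.

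The gap is exactly where you flag it, and it carries essentially all the content of the lemma: the uniqueness of the fibre-preserving involution $\tau$ up to conjugacy and the determination of $\operatorname{Fix}(\tau)$. As written, you assert the shape of the fixed set (``two disjoint arcs \dots together with one embedded circle'') before $\tau$ has been identified; nothing in the constraints you list visibly rules out a fixed set consisting of two arcs and no closed component, or a circle placed differently. Note also that $K\mathbin{\tilde\times}I$ carries two non-isotopic Seifert fibrations, over the M\"obius band and over $D^2(2,2)$, and the lifted fibration could be either one — indeed the closed component of the tangle is an exceptional fibre for one fibration of $\mathcal O_i$ and not for the other — so ``enumerate involutions by their action on the base M\"obius band'' does not cover all cases. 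To close the argument you would have to carry out the enumeration for both fibrations, which amounts to listing the Euclidean $2$-orbifold quotients of the M\"obius band and of $D^2(2,2)$ by an involution with the prescribed boundary behaviour; at that point you are essentially redoing, upstairs, the base-orbifold classification that constitutes the paper's proof. The route is viable, but as it stands the decisive step is only announced, not proved.
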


\begin{proof}
The base of the fibration is a $2$-dimensional Euclidean orbifold. These are 
completely classified (see \cite{ThurstonBook}), and we can check that there 
are only two planar orbifolds with a unique boundary component (non planar 
bases cannot correspond to fibrations of the ball). The first one is obtained 
in the following way: quotient a ``pillowcase" by the reflection in a
great circle containing its four cone points and cut in half the resulting
orbifold. This way one obtains a rectangle with three mirrored sides and one 
boundary component, and two corner points (with angle $\pi/2$ and of dihedral 
type). The second one is obtained by quotienting a M\"obius band by a
reflection in a line orthogonal to its core. The result is a bigon with one 
mirrored side and one cone point of order $2$ in its interior (note that the
reflection fixes an extra point on the central curve of the M\"obius band). 
Both base orbifolds can also be seen as the quotient by two different 
reflections of a disc with two cone points of order $2$: the axis of the first 
reflection contains both cone points, while the second reflection exchanges
them. It is not hard to see that these bases correspond to the two different 
fibrations of the tangle described in the statement of the lemma, which are 
pictured in Figure \ref{fig:eucfib}. Note that the extra closed component of 
the tangle is a fibre of the second fibration, corresponding to the cone point 
of the base orbifold.
\end{proof}

\begin{figure}
\begin{center}
{
\includegraphics[height=4cm]{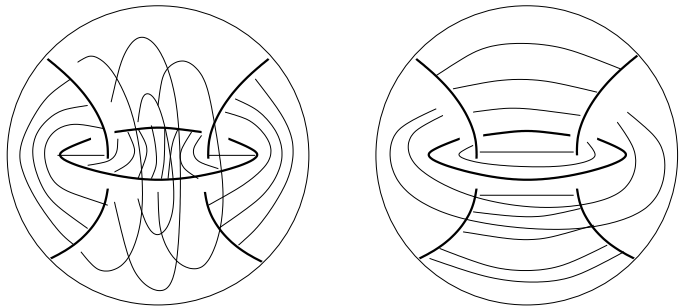}
}
\end{center}
   \caption{Two fibrations of the tangle with Euclidean base.}
\label{fig:eucfib}
\end{figure}

By putting a branching index $3$ on the closed component of the tangle of 
Lemma~\ref{lemma:euclidean} we get the following:

\begin{Corollary}
We can modify the branching indices of the orbifolds $\mathcal O_1$
and $\mathcal O_2$ so that:
\begin{itemize}

\item The singular components that meet the Conway sphere have branching index 
$2$;

\item The two resulting orbifolds, which will again be denoted by $\mathcal
O_1$ and $\mathcal O_2$, are either hyperbolic or Seifert fibred with 
hyperbolic base.
\end{itemize}
\end{Corollary}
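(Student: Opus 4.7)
The plan is to leave $\mathcal{O}_i$ unchanged whenever it is already hyperbolic of finite volume or Seifert fibred with hyperbolic base. The only case that requires any adjustment is when some $\mathcal{O}_i$ is Seifert fibred with a non-hyperbolic base. Since the base is a planar $2$-orbifold with at least one boundary component (coming from the Conway sphere), the classification carried out in the proof of Lemma~\ref{lemma:euclidean} leaves essentially the Euclidean case (the spherical case does not occur for a tangle in a ball meeting the boundary sphere in four points).

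In the Euclidean situation, Lemma~\ref{lemma:euclidean} identifies the tangle inside $\mathcal{O}_i$ as two straight vertical arcs together with an unknotted equatorial circle. This closed component lies in the interior of $\mathcal{O}_i$ and is disjoint from the Conway sphere, so modifying its branching index neither affects $\mathcal{O}_j$ nor alters the branching of components that meet the Conway sphere. I would therefore assign branching index $3$ to this closed component, keeping all other branching indices equal to $2$. This is precisely the modification suggested in the paragraph preceding the statement of the corollary.

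The remaining point is to verify that the modified orbifold $\mathcal{O}_i'$ is Seifert fibred with hyperbolic base. Using the second Seifert fibration described in Lemma~\ref{lemma:euclidean}, the closed component is the singular fibre sitting above the unique interior cone point of the base; raising its branching from $2$ to $3$ just raises the order of that cone point from $2$ to $3$ and leaves the rest of the base (a bigon with one mirror side) unchanged. Since $\chi^{\mathrm{orb}}$ of a $2$-orbifold decreases by a strictly positive amount $(1-1/3)-(1-1/2)=1/6$ when one cone-point order is increased from $2$ to $3$, and the original base had $\chi^{\mathrm{orb}}=0$, the new base satisfies $\chi^{\mathrm{orb}}<0$ and is therefore hyperbolic.

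The main obstacle is making precise the correspondence between the branching index of a closed singular fibre of the $3$-orbifold and the order of the corresponding cone point in the base of the Seifert fibration. Once this identification is in place, the Euler characteristic computation is elementary, and applying the same adjustment to each $\mathcal{O}_i$ that needs it yields both bullets of the corollary.
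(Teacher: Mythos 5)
Your proposal is correct and follows exactly the route the paper intends: the corollary is stated as an immediate consequence of Lemma~\ref{lemma:euclidean} by assigning branching index $3$ to the closed equatorial component, which is a fibre over the interior cone point of the second fibration. Your orbifold Euler characteristic computation ($\chi^{\mathrm{orb}}$ drops by $1/6$, from $0$ to $-1/6$) is a correct fill-in of the detail the paper leaves implicit.
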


\begin{Remark}\label{remark:ramification}
From now on $\mathcal O_i$, $i=1,2$, will denote the orbifold with underlying 
topological space $M_i$ and singular set $M_i\cap L$ with branching order $2$ 
on the components meeting the boundary and $3$ on the closed components. Remark 
that now $\mathcal O_i$, $i=1,2$, satisfies the conclusions of the above 
corollary. We shall denote by $\mathcal O_i'$, $i=1,2$, the compact orbifold 
obtained from $\mathcal O_i$ by drilling out open regular neighbourhoods of the 
two singular components of branching index $2$, i.e. those that meet the 
boundary.
\end{Remark}

Let $\chi_i\in X(\mathcal O'_i)$ be a lift of the character induced by the 
holonomy of the geometric structure of $\mathcal O_i$, chosen so that it
satisfies the compatibility condition established in 
Lemma~\ref{Lemma:liftsmatch}. Notice that Culler's theorem \cite{culler} allows 
to lift the representation of $\pi_1(M_i')$, and since $M_i'$ is obtained from 
$\mathcal{O}'_i$ by removing all components of order $3$, the lift extends from 
$M_i'$ to $\mathcal O'_i$, by changing the signs of the corresponding meridians
 as in the proof of Lemma~\ref{Lemma:liftsmatch}, 
if needed.

\begin{Lemma}\label{Lemma:dim3}
The character $\chi_i$ is a smooth point in $X(\mathcal O'_i)$. The traces of 
the two meridians and of a peripheral element of $\mathcal O_i$ define local 
coordinates in a neighbourhood of $\chi_i$.
\end{Lemma}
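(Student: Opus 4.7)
The plan is to exploit the relation $\pi_1(\mathcal{O}_i) = \pi_1(\mathcal{O}_i')/\langle\langle m_1^2, m_2^2 \rangle\rangle$, where $m_1, m_2$ are the meridians of the two drilled arcs of branching index $2$. Combined with the classical deformation theory for the geometric orbifold $\mathcal{O}_i$, this will yield both the smoothness of $X(\mathcal{O}_i')$ at $\chi_i$ and the claimed coordinate system.

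First, I would observe that at the level of $SL(2,\mathbb{C})$-characters, $X(\mathcal{O}_i)$ embeds in $X(\mathcal{O}_i')$ locally at $\chi_i$ as the subvariety cut out by $\mathrm{tr}(m_1) = \mathrm{tr}(m_2) = 0$: in a neighbourhood of $\chi_i$, where $\rho(m_j)$ is close to a rotation of angle $\pi$, the condition $\rho(m_j)^2 = -I$ (equivalent to the $PSL(2,\mathbb{C})$-relation $m_j^2 = 1$) is the same as $\mathrm{tr}(\rho(m_j)) = 0$. So the two meridian traces provide two functions on $X(\mathcal{O}_i')$ whose common zero locus near $\chi_i$ is (an irreducible component of) $X(\mathcal{O}_i)$.

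Second, since $\mathcal{O}_i$ is geometric and has a single end (the Euclidean pillowcase coming from $C$), the standard deformation theory gives that $X(\mathcal{O}_i)$ is smooth of complex dimension $1$ at its geometric character. Moreover, the trace of a peripheral element $\ell \in \pi_1(C)$ which is not a meridian (e.g.\ the longitude $\mu_1\mu_2$) provides a local coordinate on this curve: such traces detect the cusp-shape deformation parameter (Thurston's theory in the hyperbolic case, deformation of the base's Teichm\"uller structure in the Seifert case).

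Third, I would conclude by a dimension count. When $\mathcal{O}_i$ has no closed singular components, $\mathcal{O}_i'$ is the exterior of a two-string tangle in a ball, hence a genus-$2$ handlebody with $\pi_1(\mathcal{O}_i')\cong F_2$; consequently $X(\mathcal{O}_i') \cong \mathbb{C}^3$ is globally smooth of dimension $3$, with $(\mathrm{tr}(m_1), \mathrm{tr}(m_2), \mathrm{tr}(m_1 m_2))$ as coordinates, and it suffices to check that $\mathrm{tr}(\ell)$ is independent of $\mathrm{tr}(m_1), \mathrm{tr}(m_2)$ at $\chi_i$, which amounts to the non-constancy of $\mathrm{tr}(\ell)$ along the curve $\{\mathrm{tr}(m_1) = \mathrm{tr}(m_2) = 0\} \simeq X(\mathcal{O}_i)$ already established in the second step. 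The main obstacle will be the general case with closed components of index $3$, where $\pi_1(\mathcal{O}_i')$ is no longer free; there one has to argue directly by a cohomological computation that $\dim H^1(\pi_1(\mathcal{O}_i'); \mathrm{Ad}\,\rho_i) = 3$, combining the $1$-dimensional estimate on $X(\mathcal{O}_i)$ with the two transverse variations coming from letting the meridian traces vary.
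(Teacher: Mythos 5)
Your second step and the reduction ``$X(\mathcal O_i)=X(\mathcal O_i')\cap\{\operatorname{tr}(m_1)=\operatorname{tr}(m_2)=0\}$ near $\chi_i$'' are exactly the upper-bound half of the paper's argument: the Zariski tangent space of $X(\mathcal O_i')$ at $\chi_i$ has dimension at most $1+2=3$ because $X(\mathcal O_i,PSL(2,\mathbb C))$ is cut out by two trace functions and is infinitesimally rigid rel boundary (Thurston's Dehn filling argument in the hyperbolic case, factoring through the small base in the Seifert case). The problem is the lower bound $\dim_{\chi_i}X(\mathcal O_i')\ge 3$, which is the other half of smoothness and which your third step does not actually deliver. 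The claim that $\mathcal O_i'$ is a genus-$2$ handlebody with $\pi_1(\mathcal O_i')$ free on the meridians $m_1,m_2$ is false precisely in the situation of the lemma: if $m_1,m_2$ freely generated $\pi_1(\mathcal O_i')$, then $\pi_1(\mathcal O_i)=\pi_1(\mathcal O_i')/\langle\langle m_1^2,m_2^2\rangle\rangle$ would be the infinite dihedral group $\mathbb Z/2 * \mathbb Z/2$, contradicting the standing assumption that $\mathcal O_i$ is hyperbolic or Seifert fibred with hyperbolic base. (A tangle exterior is a handlebody only for free tangles, and even then the meridians need not be free generators; so $(\operatorname{tr}(m_1),\operatorname{tr}(m_2),\operatorname{tr}(m_1m_2))$ are not global coordinates.) In the remaining case you defer to ``a cohomological computation that $\dim H^1=3$, combining the $1$-dimensional estimate with the two transverse variations coming from letting the meridian traces vary'' --- but the existence of those two transverse variations is exactly the content to be proved, not an input.

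The paper fills this gap with Thurston's lower bound (Theorem 5.6 of the notes): for a manifold admitting an irreducible representation, $\dim X\ge -\tfrac32\chi(\partial)+\theta$, where $\theta$ is the number of torus boundary components; since $\partial M_i'$ contains a genus-$2$ surface this gives $\dim_{\chi_i}X(\mathcal O_i')\ge 3$, and the bound persists after filling the order-$3$ closed components with singular solid tori (so it applies to $\mathcal O_i'$, not just $M_i'$). Combined with the tangent-space bound $\le 3$ this yields smoothness of dimension exactly $3$, after which your independence argument for $d\operatorname{tr}(m_1),d\operatorname{tr}(m_2),d\operatorname{tr}(\ell)$ (non-constancy of $\operatorname{tr}(\ell)$ on the curve $X(\mathcal O_i)$) goes through. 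If you want to avoid quoting Thurston, you would have to prove the lower bound directly, e.g.\ via Poincar\'e duality (``half lives, half dies'') for $H^1(\partial\mathcal O_i';\mathfrak{sl}(2,\mathbb C))$; some such input is indispensable.
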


\begin{proof}

For a manifold admitting an irreducible representation, Theorem 5.6 in
Thurston's notes \cite{ThurstonNotes} states that the complex dimension of the
character variety is at least $-3e/2 +\theta$ where $e$ is the Euler characteristic
of the boundary of the manifold and $\theta$ the number of tori in the boundary.
If we fill some toric components with singular solid tori, in such a way that
the resulting orbifold still admits an irreducible representation, then
the same dimension bound still holds. This means that we can apply Thurston's
result to conclude that the variety of characters of $\mathcal O'_i$ has
dimension at least three. This gives a lower bound on the dimension.

To obtain an upper bound, we must work in the Zariski tangent space, i.e. the 
first cohomology group with coefficients in the Lie algebra twisted by the 
adjoint representation, that we denote by 
$H^1(\mathcal O'_i;\mathfrak{sl}(2,\mathbb C))$.

We first show that the variety of $PSL(2,\mathbb C)$-characters 
$X(\mathcal O_i,PSL(2,\mathbb C))$ is one-dimensional, locally parametrised by 
deformations of its boundary $\partial\mathcal O_i$ (since 
$ \pi_1(\mathcal O_i)$ has 2-torsion we must work in $PSL(2,\mathbb C)$ instead 
of $SL_2(\mathbb C)$). Moreover the Zariski tangent space at the holonomy 
character is also one dimensional, namely for the holonomy representation we 
have: $H^1(\mathcal O_i,\mathfrak{sl}(2,\mathbb C))\cong \mathbb C$.
In the hyperbolic case, this follows from the proof of Thurston's hyperbolic
Dehn filling theorem \cite{ThurstonNotes, KapovichBook} (see also
\cite{BoileauPorti} for the precise statement for orbifolds.) In the Seifert
fibred case with hyperbolic base, since all irreducible representations factor
through the base, it suffices to determine the variety of characters of the
base (cf.~\cite[Lemma~3.4]{Porti}). This is well-known, using the fact that the 
base is a small orbifold, cf.\ \cite{Goldman}.

After lifting, the $PSL(2,\mathbb C)$-character variety of $\mathcal O_i$ can 
be identified to the the subvariety of $X(\mathcal O_i')$ obtained by imposing 
that the traces of the meridians are zero, that is by intersecting the latter 
variety with two hyperplanes. Since $X(\mathcal O_i)$ is (non empty) of
dimension $1$, standard results on the dimension of intersections of algebraic
varieties imply the dimension of $X(\mathcal O_i')$ is at most
three. Similarly, since the tangent space to $X(\mathcal O_i,PSL(2,\mathbb C))$
at $\chi_i$ is one-dimensional, we see that the Zariski tangent space of
$X(\mathcal O_i')$ at $\chi_i$ is again of dimension at most
$3$.

As a consequence, $\chi_i$ is a smooth point and the lemma follows.
\end{proof}

We consider the restriction maps
$$
r_i\!:X(\mathcal O'_i)\to X(C')
$$
for $i=1,2$.

\begin{Lemma}\label{Lemma:dim2}
For a sufficiently small neighbourhood $U_i$ of $\chi_i$ in $X(\mathcal
O'_i)$, $r_i(U_i)\cap \mathcal{Y}$ is a $\mathbb C$-analytic 
surface. Moreover the Zariski closure of $\operatorname{Im}(r_i)\cap 
\mathcal{Y}$ is two-dimensional.
\end{Lemma}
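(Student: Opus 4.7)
The strategy is to use Lemma~\ref{Lemma:dim3} to show that $r_i$ is a local embedding at $\chi_i$, and then to reduce the statement to a one-equation dimension count on the smooth $3$-dimensional germ $X(\mathcal{O}'_i)$ at $\chi_i$.

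First I would check that $r_i$ is a local embedding at $\chi_i$. By Lemma~\ref{Lemma:dim3}, a neighbourhood of $\chi_i$ in $X(\mathcal{O}'_i)$ is smoothly parametrised by the triple $(t_1,t_2,p)$ consisting of the traces of the two meridians $m_1,m_2$ of the arcs of $L\cap M_i$ and of a peripheral element of $\partial\mathcal{O}_i$. Each $m_j$ is conjugate in $\pi_1(\mathcal{O}'_i)$ to a peripheral meridian $\mu_a\in\pi_1(C')$ at one of the endpoints of the arc on $C$, and a peripheral element of $\partial\mathcal{O}_i$ can be lifted to $\pi_1(C')$ via the surjection $\pi_1(C')\twoheadrightarrow\pi_1(\partial\mathcal{O}_i)$ (which kills the squares of the $\mu_a$). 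All three coordinate functions therefore factor as $r_i^{*}$ of trace functions on $X(C')$, which forces the differential $dr_i$ at $\chi_i$ to be injective; since $X(\mathcal{O}'_i)$ is smooth at $\chi_i$, this upgrades $r_i$ to a local embedding.

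Next I would compute the pullback of $\mathcal{Y}$ to $X(\mathcal{O}'_i)$. As a subvariety of $X(C')$, $\mathcal{Y}$ is cut out by the equations $t(\mu_1)=t(\mu_2)=t(\mu_3)=t(\mu_4)$. Two of them, $t(\mu_1)=t(\mu_2)$ and $t(\mu_3)=t(\mu_4)$, hold identically on $\operatorname{Im}(r_i)$ because along each arc of $L\cap M_i$ the two associated peripheral meridians of $C'$ are conjugate in $\pi_1(\mathcal{O}'_i)$; the only genuine constraint is $t(\mu_1)=t(\mu_3)$, which pulls back to $t_1-t_2=0$. This equation is non-trivial because $t_1,t_2$ are independent local coordinates at $\chi_i$, so $r_i^{-1}(\mathcal{Y})\cap U_i$ is a smooth $2$-dimensional analytic hypersurface of $U_i$, and its image under the local embedding $r_i$ is the desired $2$-dimensional $\mathbb{C}$-analytic surface $r_i(U_i)\cap\mathcal{Y}$.

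For the global Zariski closure, the surface just constructed supplies the lower bound. For the upper bound let $V$ be the irreducible component of $X(\mathcal{O}'_i)$ containing $\chi_i$; by Lemma~\ref{Lemma:dim3} it has dimension $3$ and $r_i|_V$ is generically an immersion, so the Zariski closure of $\operatorname{Im}(r_i|_V)$ is an irreducible $3$-dimensional subvariety of $X(C')$. Since $t_1-t_2$ is not identically zero on $V$, the component $V$ is not contained in $r_i^{-1}(\mathcal{Y})$, so $\operatorname{Im}(r_i|_V)\cap\mathcal{Y}$ is a proper subvariety of the irreducible $\operatorname{Im}(r_i|_V)$ and has dimension at most $2$. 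The main obstacle will be the first step: one must verify carefully that the meridians $m_1,m_2$ and the peripheral element $p$ of Lemma~\ref{Lemma:dim3} can all be represented, up to conjugation in $\pi_1(\mathcal{O}'_i)$, by elements of the subgroup $\pi_1(C')$, so that their trace functions really do factor through $r_i$.
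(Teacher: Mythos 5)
Your proof is correct and follows the same route as the paper, whose entire argument for this lemma is the one-line observation that it follows from the local coordinates of Lemma~\ref{Lemma:dim3}; you have simply filled in the details (that the three coordinate traces factor through $r_i$, so $r_i$ is a local embedding at $\chi_i$, and that $\mathcal Y$ pulls back to the single nontrivial equation $t_1=t_2$ in those coordinates). The dimension count for the Zariski closure via the irreducible component through the smooth point $\chi_i$ is also sound.
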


\begin{proof}

This follows from the description of the local coordinates around $\chi_i$
given in Lemma~\ref{Lemma:dim3}.


\end{proof}


For hyperbolic orbifolds, Lemma~\ref{Lemma:dim3} can also be understood 
geometrically, by applying the deformation theory of hyperbolic manifolds due 
to Weiss and Hodgson-Kerckhoff \cite{HodgsonKerckhoff,Weiss}.

\begin{Lemma}
\label{Lemma:transversecurves} The analytic germs 
$r_1(U_1)\cap\{t=0\}$ and $r_2(U_2)\cap\{t=0\}$ are curves. In addition 
$\chi_0$ is an isolated point of their intersection.
\end{Lemma}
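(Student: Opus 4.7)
The plan has two parts: identify each germ as an analytic curve using Lemma~\ref{Lemma:dim3}, and then rule out a shared irreducible component at $\chi_0$.

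First, I would use Lemma~\ref{Lemma:dim3} to introduce local coordinates $(a_1,a_2,c)$ on $U_i$, with $a_j=\operatorname{tr}(m_j^{(i)})$ the traces of the meridians of the two arcs of $L\cap M_i$ meeting $C$, and $c=\operatorname{tr}(\gamma)$ the trace of a peripheral element $\gamma\in\pi_1(C')\subset\pi_1(\partial\mathcal O_i)$. Under $r_i$ the hypersurface $\mathcal Y\subset X(C')$ pulls back to $\{a_1=\varepsilon a_2\}$ for an appropriate sign $\varepsilon$ (as dictated by Lemma~\ref{Lemma:liftsmatch}), and the function $t$ pulls back (up to sign) to $a_1$ on this hypersurface. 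Consequently $r_i^{-1}(\mathcal Y\cap\{t=0\})\cap U_i=\{a_1=a_2=0\}$, a smooth one-dimensional germ parameterized by $c$. Since $c$ is itself a regular non-constant function on $X(C')$ (as $\gamma\in\pi_1(C')$), the image $r_i(U_i)\cap\{t=0\}$ is a genuine analytic curve, giving the first claim.

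For the second part, I would argue by contradiction. Suppose the two curves share an irreducible analytic component $\Gamma\subset \mathcal Y\cap\{t=0\}$ through $\chi_0$. Since the set of reducible characters of $\pi_1(C')$ is a proper Zariski closed subset, a generic $\chi_s\in\Gamma$ near $\chi_0$ is irreducible. For each such $\chi_s$ I choose lifts $\chi_{i,s}\in X(\mathcal O_i)\subset X(\mathcal O'_i)$ provided by the curve $\{a_1=a_2=0\}$ in $U_i$, and by Lemma~\ref{lemma:glueirreducible} the pair $(\chi_{1,s},\chi_{2,s})$ glues to a unique character $\tilde\chi_s\in X(\mathcal O_L)$ restricting to $\chi_{i,s}$ on each piece. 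This yields a non-constant one-parameter analytic family $\{\tilde\chi_s\}$ of characters of $\mathcal O_L$ keeping the meridians of $L$ of order two.

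The main obstacle is converting this family into a contradiction with the hyperbolicity of $L$. I anticipate two possible routes. Geometrically, one passes to the double cover of $\mathcal O_L$ branched along $L$: the family lifts to a one-parameter family of representations, and because $L$ is hyperbolic, Mostow--Prasad rigidity applied to the cover (together with a careful analysis of any additional order-three branching coming from the Seifert pieces) forces the family to be constant. Algebraically, one instead computes the tangent lines $v_i=dr_i(\partial/\partial c)\in T_{\chi_0}(\mathcal Y\cap\{t=0\})$ directly using the explicit holonomies of Lemma~\ref{Lemma:liftsmatch} together with the known form of first-order deformations of the Euclidean structure on the Conway-sphere orbifold $S^2(2,2,2,2)$ as seen from each piece, and verifies that $v_1$ and $v_2$ are not proportional. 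I regard the rigorous verification of this transversality---that the two geometric decompositions produce genuinely non-parallel infinitesimal deformations of the Conway-sphere holonomy---as the principal technical difficulty.
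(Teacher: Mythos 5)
Your first paragraph is essentially the paper's own argument for the first assertion and is fine: setting $t=0$ forces the meridian traces to vanish, so the representations factor through $\pi_1(\mathcal O_i)$, whose character variety is a curve on which the restriction to $\partial\mathcal O_i$ is non-constant (the discussion in Lemma~\ref{Lemma:dim3}). The problem is the second assertion, which is the entire content of the lemma and which you do not prove.

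Your contradiction scheme does not by itself lead anywhere. If the two germs shared a component through $\chi_0$ you would indeed obtain a non-constant family of characters of the orbifold $(\mathbf S^3,L)$, but there is no rigidity theorem to contradict: that orbifold is \emph{not} geometric (the Conway sphere is an essential Euclidean $2$-suborbifold --- that is why it belongs to the Bonahon--Siebenmann family), and its double branched cover contains the essential torus lying over $C$, so it is not hyperbolic either; Mostow--Prasad rigidity is unavailable on both counts. (Indeed Lemma~\ref{lemma:lift} later produces a genuine curve $\mathcal D\subset X(S^3\setminus L)$ of characters restricting close to the $\chi_i$, so such families are not in themselves absurd; only the $t=0$ slice is rigid.) Your second, ``algebraic'' route is the correct one, but it is precisely the step you flag as the principal difficulty and leave unexecuted, so the core of the proof is missing. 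For the record, the paper carries it out as follows: pass to the characteristic $\mathbb Z^2=\langle m,l\rangle$ inside the pillowcase group $\pi_1(\partial\mathcal O_i)$; for a Seifert piece the slope of the fibre stays constant along the deformation (and the two fibrations cannot match); for a hyperbolic piece the tangent direction of the boundary trace curve is governed by the cusp shape via $d\operatorname{trace}(l)/d\operatorname{trace}(m)=\tau^2$ (and $(\tau+1)^2$ after replacing $l$ by $lm$), and the two cusp shapes seen from the two sides must differ because the induced orientations on the torus are opposite, so their imaginary parts have opposite signs. Without this (or an equivalent) transversality computation, isolatedness of $\chi_0$ is not established. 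A secondary gap: a shared component $\Gamma\subset\{t=0\}$ need not contain irreducible characters merely because the reducible locus is a proper Zariski closed subset of $X(C')$ --- you would have to rule out that $\Gamma$ lies entirely inside it (the paper's Lemma~\ref{lemma:irrdeformations} does the analogous work only for deformations with $t\neq 0$).
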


\begin{proof}
To prove that they are curves, notice that imposing $t=0$ on a representation 
of $\mathcal O'_i$ implies that it factors through a representation of the 
orbifold $\mathcal O_i$. Thus $r_i(U_i)\cap\{t=0\}$ is a curve, by the 
discussion on the variety of representations of $\mathcal O_i$ in the proof of
Lemma~\ref{Lemma:dim3} (for its restriction to $\pi_1(\partial \mathcal O_i)$ 
is non-trivial).

To prove why $\chi_0$ is an isolated point, we need to understand how the 
deformations of representations of $\mathcal O_i$ are seen in $\partial 
\mathcal O_i$, i.e. the orbifold with underlying space $C$, branching locus 
$C\cap L$ and branching indices $2$. We shall consider the induced 
representations on the index $2$ subgroup of $\pi_1(\partial \mathcal O_i)$
consisting of all elements of infinite order. It is a characteristic subgroup 
corresponding to the cyclic branched covering of the torus onto the pillowcase. 
In particular, it makes sense to talk about slopes in $\partial \mathcal O_i$.

When $\mathcal O_i$ is Seifert fibred, its deformations are realised by 
perturbing the base, while keeping the fibre trivial. Thus the slope of 
$\partial \mathcal O_i$ corresponding to the fibre remains constant for every 
perturbation. This proves the lemma when both $\mathcal O_1$ and $\mathcal O_2$ 
are Seifert fibred, because their fibrations do not match, and two different 
slopes on the torus generate its fundamental group.

When $\mathcal O_i$ is hyperbolic, the tangent space to the curve of 
deformations depends on the cusp shape, as observed in the proof of 
Thurston's hyperbolic Dehn filling. If  
$\langle m, l\mid [m,l]\rangle<\pi_1(\partial \mathcal O_i)$ is a presentation 
of the characteristic torsion-free subgroup of index two, then the cusp shape 
is the complex number $\tau$ such that 
$$\rho(l)= \pm
\begin{pmatrix}
    1 & \tau \\
    0 & 1
   \end{pmatrix}
$$
where $\rho$ is the holonomy representation for which 
$$\rho(m)= \pm
\begin{pmatrix}
    1 & 1 \\
    0 & 1
   \end{pmatrix}.
$$
In this case, the deformation satisfies \cite[Lemma~3.11]{Stefano}
$$
\frac{d\operatorname{trace}(l)} {d\operatorname{trace}(m)}=\tau^2.
$$
In particular this proves the lemma for the case of a Seifert fibred orbifold 
glued to a hyperbolic one, because no cusp shape $\tau$ can be zero.

Finally, assume that both $\mathcal O_i$'s are hyperbolic. When replacing $l$ 
by $l\, m$, the previous formula becomes:
$$
\frac{d\operatorname{trace}({l\, m})} {d\operatorname{trace}(m)}=(\tau+1)^2.
$$
So the cusp shape $\tau$ determines the tangent direction of the curve of
deformations, and different values of $\tau$ correspond to different directions of deformation. 
Hence, it suffices to show that the cusp shapes are different, 
for in that case the curves are transversal. In fact, the cusp shapes must be 
different because the induced orientations on the boundary are opposite, hence 
one of the cusp shapes has positive imaginary part, while the other one has 
negative imaginary part.
\end{proof}

\begin{Remark}\label{rem:no match}
The proof of the previous lemma shows that the holonomy of $\mathcal O_1$
restricted to its boundary is not conjugate to the restriction to the boundary
of the holonomy of $\mathcal O_2$. In particular, the induced representations 
on $C'$ are not conjugate even if they have the same character.
\end{Remark}

\begin{Proposition}
\label{proposition:intersection}
The intersection $r_1(U_1)\cap r_2(U_2)\cap\mathcal Y$ is a germ of an analytic 
curve on which $t$ is not constant.
\end{Proposition}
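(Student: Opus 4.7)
The plan is to realise $W = r_1(U_1)\cap r_2(U_2)\cap \mathcal Y$ at $\chi_0$ as the intersection of two two-dimensional analytic surface germs inside the three-dimensional irreducible hypersurface germ $\mathcal Y$, and then pin down its dimension and the behaviour of $t$ by slicing with the hyperplane $\{t=0\}$.

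First I would set $A_i=r_i(U_i)\cap \mathcal Y$, so that $\dim_{\chi_0}A_i=2$ by Lemma~\ref{Lemma:dim2}, while Lemma~\ref{Lemma:locallyirreducible} gives that the germ $(\mathcal Y,\chi_0)$ is irreducible of dimension $3$. Since $\mathcal Y$ is a hypersurface in $\mathbb C^4$, its local ring at $\chi_0$ is Cohen--Macaulay, and the classical dimension inequality for analytic subgerms of a Cohen--Macaulay ambient then yields that every component of $W=A_1\cap A_2$ through $\chi_0$ has dimension at least $2+2-3=1$.

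For the matching upper bound I would intersect $W$ with the hyperplane $H=\{t=0\}$. As observed in the proof of Lemma~\ref{Lemma:transversecurves}, any representation of $\mathcal O'_i$ with $t=0$ factors through the orbifold $\mathcal O_i$; the four meridians of $C'$ then all have trace $0$, so their restricted character lies in $\mathcal Y$. In particular $r_i(U_i)\cap H\subset \mathcal Y$, and therefore
\[
W\cap H \;=\; \bigl(r_1(U_1)\cap H\bigr)\cap\bigl(r_2(U_2)\cap H\bigr) \;=\; \{\chi_0\}
\]
as germs, the last equality being exactly the isolated-intersection statement of Lemma~\ref{Lemma:transversecurves}. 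If $W$ had a component of dimension at least $2$ at $\chi_0$, its intersection with the hyperplane $H$ would be of dimension at least $1$, contradicting the above. Hence $\dim_{\chi_0}W=1$, so $W$ is a germ of analytic curve. The same observation rules out $t\equiv 0$ on any component of $W$: otherwise such a component would be contained in $H$, forcing $W\cap H$ to be positive-dimensional. Thus $t$ is not constant on $W$.

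The delicate point is the lower bound on $\dim W$: because $\chi_0$ is a singular point of $\mathcal Y$, I cannot simply invoke transverse intersection of smooth subvarieties, and the argument must explicitly exploit the Cohen--Macaulay nature of the hypersurface germ. Everything else is a fairly direct synthesis of the preceding lemmas.
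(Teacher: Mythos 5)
Your overall architecture matches the paper's: the upper bound on $\dim_{\chi_0}W$ for $W=r_1(U_1)\cap r_2(U_2)\cap\mathcal Y$ and the non-constancy of $t$ are obtained, exactly as intended, by slicing with $\{t=0\}$ and invoking Lemma~\ref{Lemma:transversecurves}, and that half of the argument is fine. The gap is in the lower bound. The inequality $\dim(A_1\cap A_2)\ge\dim A_1+\dim A_2-\dim\mathcal Y$ for subgerms of a fixed ambient germ requires the ambient to be \emph{smooth} at the point; Cohen--Macaulayness, which every hypersurface has automatically, is not sufficient. The standard counterexample is the quadric cone $\{xy=zw\}\subset\mathbb C^4$: an irreducible Cohen--Macaulay hypersurface of dimension $3$ containing the two $2$-planes $\{x=z=0\}$ and $\{y=w=0\}$, which meet only at the origin, so the predicted lower bound $2+2-3=1$ fails. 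Since $\chi_0$ lies on the one-dimensional singular locus of $\mathcal Y$ (the point $(-2,2,2,0)$ is explicitly listed as singular in Section~\ref{section:C'}), you cannot appeal to any such ``classical'' inequality; and without the lower bound neither the claim that $W$ is a curve nor the non-constancy of $t$ is established, since your argument for the latter also presupposes that the relevant component of $W$ is positive-dimensional.

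The missing ingredient is the content of the \emph{proof} of Lemma~\ref{Lemma:locallyirreducible}, not merely its statement: there the germ of $\mathcal Y$ at $\chi_0$ is exhibited as the image of the hypersurface $\mathcal H$ under a finite surjective morphism whose two preimages $(\pm2,0,0,0)$ of $\chi_0$ are \emph{smooth} points of $\mathcal H$. Pull $A_1$ and $A_2$ back to the smooth three-dimensional germ of $\mathcal H$ at $(2,0,0,0)$; the involution $(w,u,v,t)\mapsto(-w,u,-v,t)$, which swaps the two preimage points and commutes with the morphism, guarantees that this single germ already surjects onto the germs of the $A_i$ at $\chi_0$, hence contains two-dimensional components through $(2,0,0,0)$. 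In the smooth ambient the dimension inequality is legitimate, so every component of the intersection of these preimages has dimension at least $1$, and pushing forward by the finite (hence dimension-preserving) map gives $\dim_{\chi_0}W\ge1$. With this substitution your argument closes and becomes essentially the paper's ``dimension count''.
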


\begin{proof}
Since the germ of $\mathcal Y$ at $\chi_0$ is irreducible by 
Lemma~\ref{Lemma:locallyirreducible}, this is just a dimension count, using 
Lemmata~\ref{Lemma:dim2} and~\ref{Lemma:transversecurves}. In addition, $t$ is nonconstant, again by 
Lemma~\ref{Lemma:transversecurves}.
\end{proof}

Let $\mathcal C$ be the algebraic curve which is the Zariski closure of the 
intersection $r_1(U_1)\cap r_2(U_2)\cap\mathcal Y$. Note that it contains 
$\chi_0$.

\begin{Lemma}
\label{lemma:lift}
There exists an algebraic curve 
$\mathcal D\subset X(S^3\setminus L)$ such that the 
restriction from $\pi_1(S^3\setminus L)$ to $\pi_1(C')$ induces a non-trivial 
regular map $\mathcal D\longrightarrow \mathcal C$.
\end{Lemma}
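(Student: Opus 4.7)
The idea is to lift a single irreducible point of $\mathcal{C}$ to $X(S^3\setminus L)$ using Lemma~\ref{lemma:glueirreducible}, and then propagate this lift analytically along the germ of $\mathcal{C}$, finally taking the Zariski closure.

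\textbf{Lifting a single character.} By Lemma~\ref{lemma:irrdeformations} we fix an irreducible $\chi^*\in\mathcal{C}$ with $t(\chi^*)\neq 0$, chosen generically on $\mathcal{C}$ (so that the local étaleness needed below holds). Since by construction $\mathcal{C}\subset r_1(U_1)\cap r_2(U_2)$, we pick $\chi_i^*\in U_i\subset X(\mathcal{O}_i')\subset X(M_i')$ with $r_i(\chi_i^*)=\chi^*$, for $i=1,2$. Because $\chi^*$ is irreducible, Lemma~\ref{lemma:glueirreducible} yields a unique $\tilde\chi^*\in X(S^3\setminus L)$ whose restriction to $M_i'$ equals $\chi_i^*$; in particular $\tilde\chi^*$ restricts to $\chi^*$ on $C'$.

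\textbf{Lifting the germ of $\mathcal{C}$.} Consider the algebraic fiber product $Z:=X(M_1')\times_{X(C')}X(M_2')$ with projection $\pi:Z\to X(C')$, and the canonical morphism $\iota:X(S^3\setminus L)\to Z$. By Lemma~\ref{Lemma:dim3} both $U_i$ and $r_i(U_i)$ are $3$-dimensional, so $r_i$ is generically finite, and hence étale at a generic choice of $\chi_i^*$; then $r_i$ admits an analytic local inverse near $\chi^*$. Composing with an analytic parametrization $\chi(s)$ of the germ of $\mathcal{C}$ at $\chi^*$ produces analytic curves $\chi_i(s)\in U_i$ with $\chi_i(0)=\chi_i^*$ and $r_i(\chi_i(s))=\chi(s)$. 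For each small $s$, irreducibility of $\chi(s)$ persists (irreducible locus is Zariski open), so Lemma~\ref{lemma:glueirreducible} produces a unique $\tilde\chi(s)\in X(S^3\setminus L)$ restricting to $\chi_i(s)$ on $M_i'$. That $s\mapsto\tilde\chi(s)$ is analytic follows by lifting to representations (Lemma~\ref{Lemma:irreduciblerestriction}), conjugating so that the restrictions to $\pi_1(C')$ agree, and gluing on the amalgamated product $\pi_1(S^3\setminus L)=\pi_1(M_1')\ast_{\pi_1(C')}\pi_1(M_2')$. This yields an analytic curve $\tilde\gamma\subset X(S^3\setminus L)$ through $\tilde\chi^*$.

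\textbf{Algebraic closure and non-triviality.} Let $\mathcal{D}$ be an irreducible component of the Zariski closure of $\tilde\gamma$ in $X(S^3\setminus L)$ passing through $\tilde\chi^*$. Since the restriction $R:X(S^3\setminus L)\to X(C')$ carries $\tilde\gamma$ onto an analytic germ of $\mathcal{C}$ at $\chi^*$, we have $R(\mathcal{D})\subseteq\mathcal{C}$ and the regular map $R|_{\mathcal{D}}:\mathcal{D}\to\mathcal{C}$ is non-constant. This is the required non-trivial regular map.

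\textbf{Main obstacle.} The core technical step is the analytic lifting of the germ of $\mathcal{C}$ to the fiber product $Z$. This hinges on arranging $\chi^*$ to lie in the étale locus of $r_i$, a generic condition guaranteed by the dimension count in Lemma~\ref{Lemma:dim3}. In the pathological case in which $\mathcal{C}$ is contained in the non-étale locus of some $r_i$, one has to work directly with irreducible components of $\pi^{-1}(\mathcal{C})\subset Z$ containing $(\chi_1^*,\chi_2^*)$, and argue—using that $\mathcal{C}$ is not contained in a single fiber of $r_i$—that at least one such component projects dominantly onto $\mathcal{C}$.
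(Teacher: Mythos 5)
Your route is genuinely different from the paper's. The paper never lifts an analytic arc: it chooses irreducible curves $\mathcal C_i\subset r_i^{-1}(\mathcal C)\subset X(\mathcal O_i')$ through the holonomy characters $\chi_i$, with $r_i|_{\mathcal C_i}$ non-constant, and defines $\mathcal D$ directly as the algebraic set $\{\chi\in p_1^{-1}(\mathcal C_1)\cap p_2^{-1}(\mathcal C_2)\mid r_1(p_1(\chi))=r_2(p_2(\chi))\}$, where $p_i\colon X(S^3\setminus L)\to X(M_i')$ is restriction. Lemma~\ref{lemma:glueirreducible} then shows that over the nonempty Zariski-open set $Z=r_1(\mathcal C_1)\cap r_2(\mathcal C_2)\cap\mathcal C^{irr}$ the fibres of $\mathcal D\to\mathcal C$ are finite, so the chosen component of $\mathcal D$ is automatically a curve mapping non-trivially to $\mathcal C$. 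Your local analytic lifting is legitimate --- indeed you do not need any genericity or \'etaleness discussion: by Lemma~\ref{Lemma:dim3} the local coordinates on $U_i$ (traces of the two meridians and of a peripheral element) are all pulled back from $X(C')$, so $r_i|_{U_i}$ is an analytic embedding and the lift of the germ is unique --- and your gluing step is the same application of Lemma~\ref{lemma:glueirreducible}. What the paper's algebraic construction buys is exactly the control you lose at your last step.

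That last step is the genuine gap. You take $\mathcal D$ to be a component of the Zariski closure of the analytic arc $\tilde\gamma$ and present it as the required object, but the Zariski closure of an analytic arc need not be one-dimensional (the closure of the graph of a transcendental function is a surface), and you give no bound on $\dim\mathcal D$; the statement explicitly asks for an algebraic \emph{curve}. To close this you must either show that $R|_{\mathcal D}$ has finite generic fibres over the curve $\mathcal C$ --- which is what the fibre-product construction delivers via the uniqueness in Lemma~\ref{lemma:glueirreducible} --- or cut $\mathcal D$ down to a curve on which $R$ remains non-constant. A second, smaller, inaccuracy: the claim ``$\mathcal C\subset r_1(U_1)\cap r_2(U_2)$'' is false, since $\mathcal C$ is the Zariski closure of the germ $r_1(U_1)\cap r_2(U_2)\cap\mathcal Y$ at $\chi_0$ and only an analytic neighbourhood of $\chi_0$ in $\mathcal C$ lies in that intersection; you must choose $\chi^*$ inside that germ, which Lemma~\ref{lemma:irrdeformations} permits, so this is harmless once stated correctly. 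Finally, be aware that the paper's requirement that $\mathcal C_i$ contain the holonomy characters $\chi_i$ is not needed for the lemma itself but is used in the proof of Theorem~\ref{theorem:onesphere}; your construction would serve the same purpose only after observing that $\chi_i(s)\to\chi_i$ as $\chi(s)\to\chi_0$, which does follow from the injectivity of $r_i|_{U_i}$.
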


\begin{proof}
Let $\mathcal C^{irr}$ denote the set of irreducible characters of 
$\mathcal C$, which is Zariski open. By Lemma~\ref{lemma:irrdeformations}, 
$\mathcal C^{irr}\neq\emptyset$. For $i=1,2$, let 
$\mathcal C_i\subset r_i^{-1}(\mathcal C)\subset X(\mathcal O'_i)$ 
be an irreducible curve which contains the holonomy character $\chi_i$ of 
$\mathcal O_i$. If such curve is chosen generically then we can assume that 
$r_i\! : \mathcal C_i\to  \mathcal C$ in non constant. Let 
$p_i\!: X(S^3\setminus L)\to X(M_i')$ be the morphism induced by the inclusion 
of fundamental groups. Notice that $X(M'_i)\supset X(\mathcal O'_i)$.
We consider the algebraic set:
$$
\mathcal D=\{ \chi\in p_1^{-1}(\mathcal C_1)\cap p_2^{-1}(\mathcal C_2)
\mid r_1(p_1(\chi))= r_2(p_2(\chi))\}.
$$
The intersection 
$Z=r_1(\mathcal C_1)\cap r_2(\mathcal C_2) \cap \mathcal C^{irr}$ is 
a nonempty (see Lemma~\ref{lemma:irrdeformations}) Zariski open 
subset of $\mathcal C$. In addition, every point of $Z$ is the restriction of a 
\emph{unique} character in $\mathcal D$, by Lemma~\ref{lemma:glueirreducible}. 
So if we choose an irreducible component of $\mathcal D$ containing a point 
with image in $Z$, then the restriction to this component of 
$r_1\circ p_1= r_2\circ p_2$ is non-trivial and the component is a curve. 
\end{proof}

\begin{proof}[Proof of Theorem~\ref{theorem:onesphere}]

With the notation of the previous lemma, choose a sequence of points in
$r_1(\mathcal C_1\cap U_1)\cap r_2(\mathcal C_2\cap U_2)\subset\mathcal C$ 
which converges to $\chi_0$. The sequence lifts to sequences in
$\mathcal C_i$ which converge to $\chi_i$, $i=1,2$. Up to passing to a
subsequence, all these sequences are induced by a sequence in $\mathcal D$. We 
claim that such sequence has no accumulation point in $\mathcal D$. This 
follows from the fact that the representations inducing the characters $\chi_i$ 
do not match (Remark~\ref{rem:no match}). On the other hand, the restrictions 
of these characters to the $M_i$'s are induced by representations that converge 
to the holonomy representations of $\mathcal O_i$. Now apply 
Lemma~\ref{lemma:suffideal}.
\end{proof}

\section{Bonahon-Siebenmann families}
\label{section:bs families}

Let $L$ be a hyperbolic link in the $3$-sphere and consider the orbifold whose
underling topological space is $\mathbf S^3$ and whose singular set is $L$ with
branching indices of order $2$ or $3$. Assume that its Bonahon-Siebenmann
decomposition is non trivial. Note that if we replace the ramification indices
of order $3$ by higher orders of ramification, the decomposition will still be 
the same, and even the type of geometric structure will not change. The Conway 
spheres of the decomposition cut the orbifold into pieces which are either 
hyperbolic or Seifert fibred. Note that we can increase the order of 
singularity of the components of $L$ which do not meet the toric family without 
changing the decomposition (although the geometries involved may change) and so 
that no Seifert piece has a Euclidean base. This follows from 
Lemma~\ref{lemma:euclidean} which holds for decompositions with an arbitrary 
number of pieces. From now on we shall thus assume, without loss of generality, 
that the order of singularity of the components which do not meet the 
Bonahon-Siebenmann family is $3$, while the order of singularity of the
remaining ones is $2$.

Assume a Bonahon-Siebenmann family as above is given. Let $C_1,\dots,C_k$ be 
the Conway spheres of the family. We denote by $C_l'$, $l=1,\dots k$, the 
intersection of $C_l$ with the exterior of the link. Let $M_1,\dots,M_{k+1}$ be 
the $k+1$ connected components of $S^3\setminus(C_1\cup\cdots\cup C_k)$ and 
$M'_1,\dots,M'_{k+1}$ their respective intersections with the exterior
of the link. Finally let $\mathcal O_1,\dots,\mathcal O_{k+1}$ denote the 
orbifolds with the chosen orders of ramification, corresponding to 
$M_1,\dots,M_{k+1}$, and $\mathcal O'_1,\dots,\mathcal O'_{k+1}$ the orbifolds 
obtained by removing the branching components of order $2$, i.e. those that 
meet $C_1\cup\cdots\cup C_k$. 

In the same spirit of what was done in the single Conway sphere case, we denote
by $\mathcal Y_l$, $l=1,\dots, k$,  the subvariety of $X(\mathcal C_l')$ 
obtained by imposing that all meridians have the same trace. It has the same 
equation as in (\ref{eq:hypersurface}). 

We now need to prove analogues of Lemmata~\ref{Lemma:dim3}, \ref{Lemma:dim2} 
and \ref{lemma:lift} in this new setting.

For $i=1,\dots,k+1$, let $b_i$ the number of boundary components of $M_i$. Note
that the number of arcs with ramification of order $2$ in $\mathcal O_i$ is
$2b_i$.

Lemmata~\ref{Lemma:dim3} and \ref{Lemma:dim2} will be replaced by the following 
remark in the hyperbolic case and by the subsequent lemma which deals with the 
Seifert fibred case.

\begin{Remark}\label{rem:hyperbolique}
Assume $\mathcal O_i$ is hyperbolic and let $\chi_i$ be a character in 
$X(\mathcal O'_i)$ that is a lift of the holonomy representation for 
$\mathcal O_i$. The very same proof of Lemma~\ref{Lemma:dim3} shows that 
$\chi_i$ is a smooth point and the traces of the $2b_i$ meridians and $b_i$ 
peripheral elements of $\mathcal O_i$, one for each boundary component, 
constitute a local coordinate system in a neighbourhood of $\chi_i$. 
Lemma~\ref{Lemma:dim2} (with dimension $b_i+1$ instead of $2$) follows at once.
\end{Remark}

Assume now that $\mathcal O_i$ is Seifert fibred. The hyperbolic structure of
the base may be not unique if the base is large for, in this case, the 
Teichm\"uller space has positive dimension. In particular, the choice for the
holonomy character is not unique. However, it is easy to prove that there is a
preferred structure. The base is a polygonal orbifold with mirror boundary, 
some of whose corners are cusps, and with at most one cone point in the 
interior. The preferred structure corresponds to the situation in which the 
base polygonal orbifold admits an inscribed circle tangent to every side. The
character $\chi_i\in X(\mathcal O'_i)$ will denote a lift of the holonomy 
of this preferred structure for $\mathcal O_i$. The interest of this specific
$\chi_i$ comes from the fact that it admits deformations which correspond to
hyperbolic cone structure as proved in \cite{Porti}. This means that $\chi_i$ 
is contained in an irreducible component $W_i$ of the variety of $X(\mathcal
O'_i)$ which contains points representing hyperbolic cone structures. 
Lemma~\ref{lemma:seifert} investigates some properties of the subvariety
$V_i$ of $W_i$, obtained by imposing that all meridians have the same trace.

Let $C_{l_1},\dots, C_{l_{b_i}}$ be the boundary components of $M_i$. 
We shall follow the usual convention that the character variety of a disjoint
union is the product of the character varieties of the components. Let 
$$
r_{\partial_i}\! : X(\mathcal O'_i)\longrightarrow
X(C'_{l_1}\cup\cdots\cup C'_{l_{b_i}})=
X(C'_{l_1})\times\cdots\times X( C'_{l_{b_i}})
$$ 
be the projection induced by restriction. 
 
\begin{Lemma}\label{lemma:seifert}
For a sufficiently small neighbourhood $U_i$ of $\chi_i$ in $V_i$, 
$r_{\partial_i}(U_i)$ is a $\mathbb C$-analytic $(b_i+1)$-variety and its
Zariski closure is again $(b_i+1)$-dimensional. 
\end{Lemma}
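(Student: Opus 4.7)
The plan is to adapt the arguments of Lemmata~\ref{Lemma:dim3} and~\ref{Lemma:dim2} to the Seifert fibred setting. I would first show that the germ of $V_i$ at $\chi_i$ is of complex dimension $b_i+1$, and then that the differential of $r_{\partial_i}$ is injective on the Zariski tangent space of $V_i$ at $\chi_i$. Together these imply that, for a sufficiently small $U_i$, the image $r_{\partial_i}(U_i)$ is a $(b_i+1)$-dimensional analytic germ whose Zariski closure has the same dimension.

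For the lower bound on $\dim_{\chi_i}V_i$, the key input is \cite{Porti}: the preferred (inscribed-circle) hyperbolic structure on the base of $\mathcal{O}_i$ admits a $b_i$-parameter family of deformations through hyperbolic cone-manifold structures on $\mathcal{O}_i$, parametrised by the cone angles at the $b_i$ boundary Conway spheres. By the very choice of $W_i$ these deformations lie in $W_i$, and since they preserve the turnover symmetry at each pillowcase boundary they satisfy the meridian-trace equality defining $V_i$. A further, independent parameter comes from varying the trace of the regular Seifert fibre (a Dehn-type deformation transverse to the cone-angle family), producing a $(b_i+1)$-parameter family inside $V_i$. For the matching upper bound I would mimic the proof of Lemma~\ref{Lemma:dim3}: estimate the Zariski tangent space of $X(\mathcal{O}_i,PSL(2,\mathbb{C}))$ through the character variety of the base (by the small-orbifold deformation analysis of \cite{Porti} combined with \cite{Goldman}), lift to $X(\mathcal{O}'_i)$ by incorporating the $2b_i$ meridian traces, and then cut by the equalities defining $V_i$.

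Infinitesimal injectivity of $r_{\partial_i}$ on $T_{\chi_i}V_i$ should follow from the fact that an irreducible representation of a Seifert fibred orbifold is determined up to conjugacy by its restriction to the boundary: the regular fibre is peripheral, and the peripheral subgroups together with the fibre class generate a finite-index subgroup of $\pi_1(\mathcal{O}_i)$. Cohomologically, this translates into injectivity of the restriction $H^1(\mathcal{O}'_i;\mathfrak{sl}(2,\mathbb{C})) \to H^1(\partial\mathcal{O}'_i;\mathfrak{sl}(2,\mathbb{C}))$ on the tangent directions lying in $V_i$, via the long exact sequence of the pair $(\mathcal{O}'_i,\partial\mathcal{O}'_i)$.

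The main obstacle is the upper bound on $\dim_{\chi_i}V_i$ when the base $B_i$ is large, because then Teichm\"uller deformations of $B_i$ embed into $X(\mathcal{O}_i,PSL(2,\mathbb{C}))$ and a priori contribute extra directions in $W_i$. The crucial point is that the meridian-trace equality defining $V_i$ rigidifies the peripheral behaviour of the base, so the only Teichm\"uller deformations of $B_i$ that remain in $V_i$ are those realised by the cone-angle family of \cite{Porti}. Once this rigidification is established rigorously, the dimension count closes and the remainder of the proof is routine.
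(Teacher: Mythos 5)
Your overall strategy (compute $\dim_{\chi_i}V_i=b_i+1$ and show that $r_{\partial_i}$ is an immersion at $\chi_i$) could in principle work, but it stands or falls on exactly the point you defer to your last paragraph and do not prove: the rigidity of the preferred inscribed-circle structure relative to the boundary data. The paper's proof takes a different and more economical route: it considers the map $\pi\colon U_i\to\mathbb C^{b_i+1}$ whose components are $t$ and the traces of $b_i$ peripheral elements of the base (one per boundary component of $M_i$), invokes the analogue of \cite[Claim~6.6]{Porti} to obtain the set-theoretic statement $\pi^{-1}(\pi(\chi_i))=\{\chi_i\}$, and then concludes via Remmert's proper mapping theorem (analyticity of $r_{\partial_i}(U_i)$, since $\pi$ factors through $r_{\partial_i}$) and the openness principle ($\pi(U_i)$ is a neighbourhood of $\pi(\chi_i)$ in $\mathbb C^{b_i+1}$, which gives the dimension). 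No tangent-space computation is needed. The rigidity input is precisely Porti's Claim~6.6; in your write-up it appears only as ``once this rigidification is established rigorously, the dimension count closes'', which is the heart of the matter and is left unproven.

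Two of the auxiliary claims you offer in its place are moreover incorrect or misplaced. First, the peripheral subgroups of a Seifert fibred orbifold with large hyperbolic base, together with the fibre class, do not generate a finite-index subgroup of $\pi_1(\mathcal O_i)$ (already for a surface with boundary, the boundary curves generate an infinite-index subgroup), and the restriction $H^1(\mathcal O_i';\mathfrak{sl}(2,\mathbb C))\to H^1(\partial\mathcal O_i';\mathfrak{sl}(2,\mathbb C))$ is genuinely non-injective when the base is large: Teichm\"uller deformations of the base fixing the boundary holonomies lie in the kernel. Injectivity could at best hold after restricting to $T_{\chi_i}V_i$, which is again the unproven rigidity. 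Second, the meridian-trace equality defining $V_i$ does not by itself ``rigidify the peripheral behaviour of the base'': it only forces the cone angles along the order-$2$ arcs to be equal, and places no constraint on the moduli of the base with those angles fixed. The rigidification comes from additionally fixing the $b_i$ boundary traces together with the inscribed-circle condition, i.e.\ from \cite[Claim~6.6]{Porti}, which must be cited or reproved for your argument to close.
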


\begin{Remark}
When the base of the fibration of $\mathcal O_i$ is large, namely when
the Teichm\"uller space of this base is non-trivial,
the character $\chi_i$ is contained in another component of $X(\mathcal O_i')$.
This other component can be identified to the character variety of $\mathcal 
O_i$, whose points correspond to deformations of the base of the fibration, and 
it is locally the complexification of its Teichm\"uller space. In this other 
component $t$ is constant, i.e. $t=0$.
\end{Remark}

\begin{proof}[Proof of Lemma~\ref{lemma:seifert}]
Consider the map $\pi \!:\! U_i\subset V_i \to \mathbb C^{b_i+1}$, whose 
components are $t$ and the trace of peripheral elements corresponding to the 
boundary components of the base of the fibration, one for each boundary 
component of $M_i$. By the analogue of \cite[Claim~6.6]{Porti} (and with 
exactly the same proof), $\pi^{-1}(\pi(\chi_i)))=\{\chi_i\}$. Since $\pi$ 
factors through $r_{\partial_i}$, by Remmert's proper map theorem 
$r_{\partial_i}(U_i)$ is a $\mathbb C$-analytic subvariety \cite{Remmert} 
(cf.~\cite[Thm.~V.4.A]{Whitney} or \cite[Thm.~V.C.5]{GunningRossi}). In 
addition, by the openness principle, $\pi(U_i)$ is a neigbourhood of 
$\pi(\chi_i)$ in $\mathbb C^{b_i+1}$. This proves that $r_{\partial_i}(U_i)$ 
has dimension $b_i+1$. 
\end{proof}

Given $\chi_i\in X(\mathcal O'_i)$ a lift of the holonomy as above, we take 
$V_i$ to be the irreducible subvariety defined as follows.

\begin{itemize}

\item 
When $\mathcal O_i$ is hyperbolic, $V_i$ is defined to be the unique 
irreducible component of the subvariety of $X(\mathcal O_i')$ obtained by 
imposing that the traces of all meridians are the same, and containing 
$\chi_i$. This is well defined by Remark~\ref{rem:hyperbolique}.

\item  
When $\mathcal O_i$ is Seifert fibred, then $V_i$ is as in 
Lemma~\ref{lemma:seifert}.

\end{itemize}
By Remark~\ref{rem:hyperbolique} and Lemma~\ref{lemma:seifert}, for every 
$i=1,\ldots,k+1$ there exists an open neighborhood $\chi_i\in U_i\subset V_i$
such that $r_{\partial_i}(U_i)$ is a $\mathbb C$-analytic $(b_i+1)$-variety.

Now group the components $M_1,\ldots, M_{k+1}$ in two families, so that if 
$M_i$ and $M_j$ are in the same family and $i\neq j$, then 
$M_i\cap M_j=\emptyset$. This is possible because the dual graph to the 
decomposition along Conway spheres is a tree. Up to permuting the indices, we 
may assume that the first family is $M_1,\ldots, M_{k_0}$ and the second one 
$M_{k_0+1},\ldots,M_{k+1}$.
We denote:
$$
M_+=M_1\cup\cdots\cup M_{k_0}\quad\textrm{ and }\quad 
M_-=M_{k_0+1}\cup\cdots\cup M_{k+1}.
$$
Similarly one defines $\mathcal O_{\pm}$ and $\mathcal O'_{\pm}$. Notice that, 
since $\mathcal O'_{\pm}$ is not connected,
$$
X(\mathcal O_+')=X(\mathcal O_1')\times \cdots\times X(\mathcal O_{k_0}'),
$$
and analogously for $X(\mathcal O_-')$. Let
$$
\chi_+=(\chi_1,\ldots,\chi_{k_0}) \in X( \mathcal O_+') \quad
\textrm{ and }\quad 
\chi_-=(\chi_{k_0+1},\ldots,\chi_{k+1})\in X( \mathcal O_-') .
$$
Remember that $M_i$ has $b_i$ boundary components, then
$$
b_1+\cdots+ b_{k_0}=b_{k_0+1}+\cdots+ b_{k+1}=k
$$
Consider the product
$$
\mathcal Y=\mathcal Y_1\times\cdots \mathcal Y_k,
$$
and the restriction maps whose components are the $ r_{\partial_{i}}$
$$
r_{\pm}\!:\!          X(\mathcal O_{\pm}')\to \mathcal Y.
$$

Take $V_+=V_1\times\cdots\times V_{k_0}$ and 
$V_-=V_{k_0+1}\times\cdots\times V_{k+1}$. By Remark~\ref{rem:hyperbolique} and 
Lemma~\ref{lemma:seifert}, there exist $U_{\pm}\subset V_\pm$ neighbourhoods of 
$\chi_{\pm}$ such that $r_{\pm}(U_{\pm})$ is an analytic variety of dimension 
$b_{\pm}$, where
$$
b_+ = \sum_{i=1}^{k_0} (b_i+1)= k+k_0\quad\textrm{ and }\quad 
b_-=\sum_{i=k_0+1}^{k+1} (b_i+1)= 2 k+1 -k_0.
$$

Notice that $b_++b_-=3 k+1= \dim \mathcal Y+1$. Now the analogue of 
Lemma~\ref{Lemma:transversecurves}, that uses the same arguments and the 
results of Remark~\ref{rem:hyperbolique} and Lemma~\ref{lemma:seifert}, is the 
following lemma. 

\begin{Lemma}
\label{Lemma:transversevarieties}
The analytic varieties $r_{+}(U_{+})\cap \{t=0\}$ 
and $r_{-}(U_{-})\cap \{t=0\}$ 
have both dimension $k$. 
In addition, $\chi_0 $ is an isolated point of the intersection 
$r_{+}(U_{+})\cap r_{-}(U_{-})\cap \{t=0\}$.
\end{Lemma}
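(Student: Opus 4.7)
The plan is to mimic the proof of Lemma~\ref{Lemma:transversecurves} in a product setting, reducing the multi-sphere transversality statement to the sphere-by-sphere transversality already established in the single-sphere case. The dimension statement is a direct count: by Remark~\ref{rem:hyperbolique} and Lemma~\ref{lemma:seifert}, each $r_{\partial_i}(U_i)$ is a $(b_i+1)$-dimensional analytic variety in which the common meridian trace $t$ serves as a local coordinate. Imposing $t=0$ forces the representation to factor through $\mathcal{O}_i$ rather than $\mathcal{O}'_i$, dropping the dimension by one to $b_i$. Taking products over the $k_0$ pieces in the $+$ family yields $\sum_{i\le k_0} b_i=k$, and similarly for the $-$ side.

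For the isolation of $\chi_0$, I would work at the level of Zariski tangent spaces inside the $2k$-dimensional germ $\mathcal{Y}\cap\{t=0\}=\prod_l(\mathcal{Y}_l\cap\{t_l=0\})$, which splits as $\bigoplus_l T_l$ with $T_l:=T_{\chi_0^{(l)}}(\mathcal{Y}_l\cap\{t_l=0\})$ two-dimensional. The key claim is that
\[
T_{\chi_0}\bigl(r_+(U_+)\cap\{t=0\}\bigr)=\bigoplus_{l=1}^k T_+^{(l)}, \qquad T_{\chi_0}\bigl(r_-(U_-)\cap\{t=0\}\bigr)=\bigoplus_{l=1}^k T_-^{(l)},
\]
where $T_\pm^{(l)}\subset T_l$ is the one-dimensional line contributed by the unique piece in the $\pm$ family adjacent to $C_l$. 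For hyperbolic $\mathcal{O}_i$ this decomposition follows from Thurston's hyperbolic Dehn filling theorem, which parameterises deformations by independent cusp-shape variations at each boundary, and the projection to each factor $T_l$ recovers exactly the tangent line identified in the proof of Lemma~\ref{Lemma:transversecurves}. For Seifert-fibred $\mathcal{O}_i$ with hyperbolic base the analogous picture holds since deformations preserve the fibre slope at each boundary torus.

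The conclusion follows at once: by Lemma~\ref{Lemma:transversecurves} applied locally at each Conway sphere, and using the fact that the two-coloring places the pieces adjacent to $C_l$ in opposite families, the lines $T_+^{(l)}$ and $T_-^{(l)}$ are distinct in $T_l$ and therefore span $T_l$. Summing over $l$ yields
\[
T_{\chi_0}\bigl(r_+(U_+)\cap\{t=0\}\bigr)+T_{\chi_0}\bigl(r_-(U_-)\cap\{t=0\}\bigr)=\bigoplus_{l=1}^k T_l,
\]
the entire ambient tangent space. Combined with the matched dimension count $k+k=2k=\dim(\mathcal{Y}\cap\{t=0\})$, this transversality of tangent spaces forces $\chi_0$ to be an isolated point of the intersection.

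The main obstacle I expect is verifying the tangent-space decomposition: one must ensure that the image of the deformation tangent space of a multi-cusped piece $\mathcal{O}_i$ really is the full direct sum $\bigoplus_{l\in\partial\mathcal{O}_i} T_i^{(l)}$, rather than some proper (Lagrangian-like) subspace mixing contributions from different $C_l$'s. This requires a careful appeal to the independence of Thurston's cusp-shape deformations in the hyperbolic case, its Seifert analogue coming from independent deformations of the hyperbolic base, and the observation that the tangent direction extracted in the proof of Lemma~\ref{Lemma:transversecurves} depends only on the local geometry near $C_l$.
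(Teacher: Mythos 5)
Your argument is correct and follows the paper's intended route: the paper proves this lemma only by asserting that the arguments of Lemma~\ref{Lemma:transversecurves} carry over via Remark~\ref{rem:hyperbolique} and Lemma~\ref{lemma:seifert}, and your dimension count together with the sphere-by-sphere splitting of the tangent space of $r_{\pm}(U_{\pm})\cap\{t=0\}$ into lines $T_{\pm}^{(l)}$ is precisely the right way to make that assertion precise. The point you rightly flag---that the boundary restriction of the deformation space of a multi-cusped piece is, to first order, a direct sum of lines, one per cusp---does hold by the first-order form of Thurston's Dehn-filling parameters (and its Seifert analogue), and the bipartition of the dual tree ensures that the two lines in each factor come from pieces in opposite families, so they are distinct by the cusp-shape/fibre-slope argument of Lemma~\ref{Lemma:transversecurves}.
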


Notice that the traces of meridians of $\mathcal Y_l$ and $\mathcal Y_{l'}$
are different variables, for $l\neq l'$. In $\mathcal Y\cap \{t=0\}$ we require 
that they are all the same and equal to zero. In particular
$\dim (\mathcal Y\cap \{t=0\})=2 k$. On the other hand, by construction, in 
$r_{+}(U_{+})\cap r_{-}(U_{-})$ the traces of the meridians of $\mathcal Y_l$ 
are the same.

As a corollary of Lemma~\ref{Lemma:transversevarieties}, we obtain the analogue 
of Proposition~\ref{proposition:intersection}, using 
Lemma~\ref{Lemma:locallyirreducible}.

\begin{Proposition}
\label{proposition:HDintersection}
The intersection 
$r_+(U_+)\cap r_-(U_-)\cap\mathcal Y$ is a curve on which $t$ is not constant.
\end{Proposition}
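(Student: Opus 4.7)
The plan is to reproduce the argument of Proposition~\ref{proposition:intersection} in this higher-dimensional setting, relying on a single dimension count in a locally irreducible ambient variety and then using Lemma~\ref{Lemma:transversevarieties} to pin down the dimension from above. The essential input is that the numerology $b_+ + b_- = 3k+1 = \dim\mathcal Y + 1$ is exactly what the ``proper intersection'' bound needs.

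First, I would note that the germ of $\mathcal Y = \mathcal Y_1\times\cdots\times\mathcal Y_k$ at $\chi_0=(\chi_0^{(1)},\dots,\chi_0^{(k)})$ is irreducible: each factor is an irreducible germ by Lemma~\ref{Lemma:locallyirreducible}, and a finite product of irreducible analytic germs is irreducible. In particular $\mathcal Y$ has pure dimension $3k$ at $\chi_0$. By construction $r_\pm(U_\pm)\subset\mathcal Y$, since on $V_\pm$ the meridians of each $C'_l$ have common trace. Remark~\ref{rem:hyperbolique} and Lemma~\ref{lemma:seifert} give $\dim r_+(U_+)=b_+=k+k_0$ and $\dim r_-(U_-)=b_-=2k+1-k_0$.

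Since the ambient germ $\mathcal Y$ is irreducible at $\chi_0$, the standard intersection dimension inequality for analytic subvarieties in a locally irreducible ambient yields
\[
\dim_{\chi_0}\bigl(r_+(U_+)\cap r_-(U_-)\bigr)\;\geq\;b_++b_- -\dim\mathcal Y\;=\;1.
\]
For the reverse inequality, suppose for contradiction that this intersection germ had dimension $\geq 2$ at $\chi_0$. Since $\{t=0\}$ is a hypersurface cut out by the single regular function $t$, Krull's principal-ideal type bound forces
\[
\dim_{\chi_0}\bigl(r_+(U_+)\cap r_-(U_-)\cap\{t=0\}\bigr)\;\geq\;1.
\]
But Lemma~\ref{Lemma:transversevarieties} asserts precisely that $\chi_0$ is an isolated point of this further intersection, a contradiction. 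Hence the intersection is a germ of a curve at $\chi_0$, and taking its Zariski closure gives $\mathcal C$.

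Finally, non-constancy of $t$ is immediate: if $t$ were constant on this curve, then, as $t(\chi_0)=0$, the whole curve would lie in $r_+(U_+)\cap r_-(U_-)\cap\{t=0\}$, which by Lemma~\ref{Lemma:transversevarieties} is the isolated point $\{\chi_0\}$, contradicting the fact that we already have a one-dimensional germ. The main delicate point in this plan is the lower bound on the intersection dimension, which is what makes the local irreducibility of $\mathcal Y$ essential; everything else is then a clean dimension-and-hyperplane argument formally identical to the $k=1$ case.
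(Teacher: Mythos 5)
Your skeleton is the paper's: reduce to the same dimension count as in Proposition~\ref{proposition:intersection}, using local irreducibility of $\mathcal Y$ at $\chi_0$ for the lower bound and Lemma~\ref{Lemma:transversevarieties} for the upper bound and the non-constancy of $t$. But the step you call the ``standard intersection dimension inequality for analytic subvarieties in a locally irreducible ambient'' is not a valid general principle, and it is the crux of the argument. In the irreducible quadric cone $\{xy=zw\}\subset\mathbb C^4$, of dimension $3$, the two planes $\{x=z=0\}$ and $\{y=w=0\}$ meet only at the vertex, so $\dim(A\cap B)\geq \dim A+\dim B-\dim Y$ can fail at a singular point of an irreducible germ --- and $\chi_0$ \emph{is} a singular point of each $\mathcal Y_l$ (it lies on one of the three singular curves). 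What actually rescues the bound is the extra content of the proof of Lemma~\ref{Lemma:locallyirreducible}: it exhibits a finite surjective morphism $\mathcal H\to\mathcal Y_l$ with $\mathcal H$ \emph{smooth} at the preimages of $\chi_0$. Taking the product of these maps, pulling $r_+(U_+)$ and $r_-(U_-)$ back to the smooth $3k$-dimensional source (where a finite map preserves dimensions and, by openness, images of neighbourhoods are neighbourhoods), applying the inequality there and pushing forward gives $\dim_{\chi_0}\bigl(r_+(U_+)\cap r_-(U_-)\bigr)\geq b_++b_--3k=1$. You need to route the lower bound through this smooth model; bare irreducibility does not suffice.

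A second, smaller imprecision is specific to the case $k>1$: in $\mathcal Y=\mathcal Y_1\times\cdots\times\mathcal Y_k$ the locus $\{t=0\}$ means $t_1=\cdots=t_k=0$ and has codimension $k$, so it is \emph{not} a hypersurface cut out by a single function on $\mathcal Y$. Your Krull-type step (and likewise your non-constancy argument) only works after observing that on $r_+(U_+)\cap r_-(U_-)$ the coordinates $t_1,\dots,t_k$ all coincide: each $V_i$ forces all meridians of $\mathcal O_i'$ to have equal trace, adjacent pieces share meridians on each $C_l'$, and the dual graph is connected. Hence on that intersection $\{t=0\}$ is cut out by the single function $t_1$, and cutting a component of dimension $\geq 2$ by it would leave something of dimension $\geq 1$ inside $r_+(U_+)\cap r_-(U_-)\cap\{t=0\}$, contradicting Lemma~\ref{Lemma:transversevarieties}. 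This is precisely the remark the paper inserts just before the proposition; without it the passage from ``dimension $\geq2$'' to a contradiction does not follow.
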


Exactly the same proof as that of Lemma~\ref{lemma:lift} gives:

\begin{Lemma}\label{lemma:multilift}
There exists an algebraic curve $\mathcal D\subset X(S^3\setminus L)$ such that 
the restriction from $\pi_1(S^3\setminus L)$ to 
$\pi_1(C_1'),\ldots,\pi_1(C_k')$ induces a non-trivial regular map 
$\mathcal D\longrightarrow \mathcal C$.
\end{Lemma}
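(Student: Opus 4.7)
The plan is to imitate the proof of Lemma~\ref{lemma:lift} essentially verbatim, with the bipartite pair $(\mathcal O'_+,\mathcal O'_-)$ playing the role of $(\mathcal O'_1,\mathcal O'_2)$. First, let $\mathcal C$ denote the Zariski closure of the curve $r_+(U_+)\cap r_-(U_-)\cap\mathcal Y$ furnished by Proposition~\ref{proposition:HDintersection}, and let $\mathcal C^{irr}\subset\mathcal C$ be the Zariski open locus of characters whose restriction to every $\pi_1(C'_l)$, $l=1,\ldots,k$, is irreducible. The first key step is to check that $\mathcal C^{irr}\neq\emptyset$: I would apply Lemma~\ref{lemma:irrdeformations} sphere by sphere, using that $t$ is non-constant on $\mathcal C$ to guarantee that the projection of $\mathcal C$ to at least one factor $\mathcal Y_l$ is a non-constant deformation with non-vanishing $t$-coordinate, and then combining the resulting Zariski open conditions (one per sphere) via intersection.

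Next I would choose, for each sign, a generic irreducible curve $\mathcal C_\pm\subset r_\pm^{-1}(\mathcal C)\subset V_\pm$ through $\chi_\pm$ whose restriction $r_\pm\!:\mathcal C_\pm\to\mathcal C$ is non-constant; such a curve exists because $r_\pm(V_\pm)$ has dimension $b_\pm\geq 1$ and projects onto $\mathcal C$ by construction. Letting $p_\pm\!:X(\mathbf S^3\setminus L)\to X(M'_\pm)$ be the restriction morphisms induced by the inclusions of fundamental groups (recall $X(M'_\pm)\supset X(\mathcal O'_\pm)$), I would define
$$
\mathcal D=\bigl\{\chi\in p_+^{-1}(\mathcal C_+)\cap p_-^{-1}(\mathcal C_-)\bigm| r_+(p_+(\chi))=r_-(p_-(\chi))\bigr\}.
$$
The set $Z=r_+(\mathcal C_+)\cap r_-(\mathcal C_-)\cap\mathcal C^{irr}$ is then a nonempty Zariski open subset of $\mathcal C$. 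Over any $\chi^Z\in Z$ sits a unique pair $(\chi^Z_+,\chi^Z_-)\in\mathcal C_+\times\mathcal C_-$ (unique by Lemma~\ref{Lemma:irreduciblerestriction} applied sphere-wise), and by Lemma~\ref{lemma:glueseveralirreducible} this pair glues to a unique character in $X(\mathbf S^3\setminus L)$ lying in $\mathcal D$. Any irreducible component of $\mathcal D$ containing such a lift is a curve on which $r_+\circ p_+=r_-\circ p_-$ is non-constant, and is the desired $\mathcal D$.

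The main obstacle I anticipate is the verification that $\mathcal C^{irr}\neq\emptyset$: Lemma~\ref{lemma:irrdeformations} as stated treats a single Conway sphere, so some care is needed to obtain irreducibility of the restriction to all $k$ Conway spheres simultaneously. This should follow by a direct iteration of the single-sphere argument — reducibility on each individual sphere cuts out a proper closed subvariety of $\mathcal C$ thanks to the non-constancy of $t$ and the transversality built into Lemma~\ref{Lemma:transversevarieties}, so the complementary Zariski open sets intersect nontrivially — but this is the only step that genuinely requires adaptation rather than verbatim copying from the single-sphere case.
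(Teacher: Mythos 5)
Your proposal is correct and follows exactly the route the paper intends: the paper's own ``proof'' of this lemma is literally the single sentence that the proof of Lemma~\ref{lemma:lift} carries over verbatim, and your adaptation --- replacing $(\mathcal O'_1,\mathcal O'_2)$ by the bipartite pair $(\mathcal O'_+,\mathcal O'_-)$, defining $\mathcal D$ and $Z$ in the same way, and gluing via Lemma~\ref{lemma:glueseveralirreducible} --- is precisely that. Your extra care about $\mathcal C^{irr}\neq\emptyset$ (applying Lemma~\ref{lemma:irrdeformations} sphere by sphere and intersecting cofinite open subsets of the curve) is the right way to fill the one detail the paper leaves implicit.
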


Finally, the proof of Theorem~\ref{theorem:morespheres} is the same as the 
proof of Theorem~\ref{theorem:onesphere}.

\begin{footnotesize}


\bibliographystyle{plain}


\textsc{Universit\'e de Provence, LATP UMR 6632 du CNRS}
	
\textsc{CMI, Technop\^ole de Ch\^ateau Gombert}

\textsc{39, rue F. Joliot Curie, 13453 Marseille CEDEX 13, France}
	
{paoluzzi@cmi.univ-mrs.fr}

\medskip

\textsc{Departament de Matem\`atiques, Universitat Aut\`onoma de Barcelona.}

\textsc{08193 Bellaterra, Spain}

{porti@mat.uab.es}

\end{footnotesize}

\end{document}